\documentclass{imanumao}
\usepackage{graphicx}
\jno{drnxxx}
\revised{}

\usepackage{amssymb}
\usepackage{amsmath}
\usepackage{amsthm}

\newtheoremstyle{assumption}{6pt}{6pt}{\rm}{}{\sffamily}{ }{ }{}
\theoremstyle{assumption}
\newtheorem{assumption}[theorem]{\sc Assumption}

\newtheoremstyle{remarkp}{6pt}{6pt}{\rm}{}{\sffamily}{ }{ }{}
\theoremstyle{remarkp}
\newtheorem{remarkp}[theorem]{\sc Remark}

\newtheoremstyle{lemmap}{6pt}{6pt}{\rm}{}{\sffamily}{ }{ }{}
\theoremstyle{lemmap}
\newtheorem{lemmap}[theorem]{\sc Lemma}

\newtheoremstyle{corollaryp}{6pt}{6pt}{\rm}{}{\sffamily}{ }{ }{}
\theoremstyle{corollaryp}
\newtheorem{corollaryp}[theorem]{\sc Corollary}

\usepackage{color}

\newcommand{\DEL}[1]{}

\newcommand{\Bs}{B-series}
\newcommand{\BsEx}[3]{B_{#1}(#2)(#3)}
\newcommand{\BsLaw}[2]{\widehat B_{#1}(#2)}
\newcommand{\BsLawIn}[2]{\widehat B_{#1}^{(i)}(#2)}

\newcommand{\MvI}[2]{I_{#1}(#2)}

\newcommand{\MvQ}[1]{\^I(#1)}
\newcommand{\MvQi}[2]{\^ I_{#1}(#2)}

\newcommand{\Dif}[1]{D^{#1}}

\newcommand{\NodeToBush}[2]{#2^{(#1)}}

\newcommand{\bush}[1]{[\,\treeroot\,]^{#1}}

\renewcommand{\^}{\widehat }

\newcommand{\abs}[1]{\left\vert #1 \right\vert}
\newcommand{\norm}[1]{\left\Vert #1 \right\Vert}

\newcommand{\bfr}{\mathbf{r}}
\newcommand{\bfu}{\mathbf{u}}
\newcommand{\calD}{{\mathcal D}}
\newcommand{\calF}{{\mathcal F}}
\newcommand{\calO}{{\mathcal O}}
\newcommand{\calT}{{\mathcal T}}
\newcommand{\C}{\mathbb C}
\newcommand{\N}{\mathbb N}
\newcommand{\R}{\mathbb R}
\newcommand{\Z}{\mathbb Z}
\newcommand{\Fe}{\Psi}
\newcommand{\Ge}{G}
\newcommand{\Gn}{\widehat G}
\newcommand{\Hper}{H_{\text per}}
\newcommand{\ii}{\hbox{\rm i}}
\newcommand{\IE}{i.~e.}

\newcommand{\ee}{\hbox{\rm e}}
\newcommand{\dd}{\hbox{\rm d}}

\newcommand{\stages}{s}
\newcommand{\tree}{\tau}
\newcommand{\sw}{h}

\newcommand{\comm}[1]{[ #1 ]}
\newcommand{\famF}{\mathcal F}
\newcommand{\CE}{C_\famF}

\newcommand{\monomz}{\mathbf z}

\usepackage{tikz,pgfplots}
\usetikzlibrary{arrows}

%
\setlength{\unitlength}{0.5pt}
\newcommand{\treeroot}{
   \begin{tikzpicture}
   \filldraw [white](0,0) circle (0pt);
   \filldraw (0,0.1) circle (1.2pt);
   \end{tikzpicture}}
\newcommand{\treeba}[1]{
   \begin{tikzpicture}[scale=#1]
      \draw[line width=0.4] (0,0) -- (0.15,0.15);
      \filldraw (0,0) circle (1.2pt);
      \filldraw (0.15,0.15) circle (1.2pt);
   \end{tikzpicture}}

\usepackage{soul,xcolor}
\setstcolor{red}

 \begin{document}

\title{On the convergence of Lawson methods for semilinear stiff problems}%


\author{{\sc Marlis Hochbruck\thanks{marlis.hochbruck@kit.edu}}\\[2pt]
Institut f\"ur Angewandte und Numerische Mathematik,
Karlsruher Institut f\"ur Technologie,\\
D-76149 Karlsruhe, Germany\\[2pt]
{\sc and\\[2pt]
Alexander Ostermann\thanks{alexander.ostermann@uibk.ac.at}}\\[2pt]
Institut f\"ur Mathematik, Universit\"at Innsbruck, A-6020 Innsbruck, Austria
}


\maketitle

\begin{abstract}
{Since their introduction in 1967, Lawson methods have achieved constant interest in the time discretization of evolution equations. The methods were originally devised for the numerical solution of stiff differential equations. Meanwhile, they constitute a well-established class of exponential integrators. The popularity of Lawson methods is in some contrast to the fact that they may have a bad convergence behaviour, since they do not satisfy any of the stiff order conditions. The aim of this paper is to explain this discrepancy. It is shown that non-stiff order conditions together with appropriate regularity assumptions imply high-order convergence of Lawson methods. Note, however, that the term regularity here includes the behaviour of the solution at the boundary. For instance, Lawson methods will behave well in the case of periodic boundary conditions, but they will show a dramatic order reduction for, e.g., Dirichlet boundary conditions. The precise regularity assumptions required for high-order convergence are worked out in this paper and related to the corresponding assumptions for splitting schemes. In contrast to previous work, the analysis is based on expansions of the exact and the numerical solution along the flow of the homogeneous problem. Numerical examples for the Schr\"{o}dinger equation are included.}
{exponential integrators; Lawson methods; linear and nonlinear Schr\"{o}dinger equations; evolution equations; order conditions.}
\end{abstract}

\section{Introduction} \label{sec:intro}

Exponential integrators are a well-established class of methods for the numerical solution of semilinear stiff differential equations. If the stiff initial value problem stems from a spatial semi-discretization of an evolutionary partial differential equation (PDE), the very form of the domain of the spatial differential operator enters the convergence analysis. The \emph{stiff order conditions}, which guarantee a certain order of convergence independently of the considered problem, must be independent of the domain of this operator (which, in general, involves certain boundary conditions). This is the main reason why stiff order conditions for exponential integrators are quite involved (see \cite{HocO05SIAM} and \cite{LuaO13}).

For particular problems, however, less conditions are required for obtaining a certain order of convergence. (The same is true for ordinary differential equations (ODEs), where linear problems, e.g., require less order conditions for Runge--Kutta methods than nonlinear ones.) It was already observed in \cite{HocO05Apnum} that periodic boundary conditions do not give any order reduction in exponential integrators of collocation type in contrast to homogeneous Dirichlet boundary conditions, which restrict the order of convergence considerably (close to the stage order, depending on the precise situation). Full-order convergence for periodic boundary conditions was also noticed in \cite{KasT05} and \cite{BesDLV17}.

A similar behaviour can be observed for Lawson methods which are obtained by a linear variable transformation from (explicit) Runge--Kutta methods (see \cite{Law67} and Section~\ref{sec:lawson} below). These methods are very attractive, since they can be easily constructed from any known Runge--Kutta method. Unfortunately, Lawson methods exhibit a strong order reduction, in general. For particular problems, however, they show full order of convergence (see \cite{CanG15P}, \cite{BalFMMTP16}, and \cite{MonB16}). By construction, Lawson methods do satisfy the order conditions for non-stiff problems. Such conditions will be called \emph{non-stiff} or \emph{conventional order conditions} henceforth. However, Lawson methods \emph{do not} satisfy any of the stiff order conditions, as detailed in \cite{HocO05SIAM}, \cite{HocO10}, and \cite{LuaO13}. This fact can result in a dramatic order reduction, even down to order one for parabolic problems with homogeneous Dirichlet boundary conditions.

So far, the derivation of (stiff) order conditions for exponential integrators was based on standard expansions of the exact and the numerical solution. There, the main assumption on the problem is that the exact solution and its composition with the nonlinearity are both sufficiently smooth in time (see \cite{HocO05SIAM} and \cite{LuaO13}). Any additional regularity in space is not of immediate benefit in this analysis. This is in contrast to splitting methods, where spatial regularity usually shows up in form of commutator bounds (see, e.g., \cite{JahL00}).

In this paper, we study the convergence behaviour of Lawson methods for semilinear problems. One of the main contributions of this paper is a different expansion of the solution. It is still based on the variation-of-constants formula but the nonlinearity is expanded along the flow of the homogeneous problem. This expansion can be derived in a systematic way using trees as in \cite{HaiNW93} and \cite{LuaO13}. The expansion of the exact solution is carried out in terms of elementary integrals, that of the numerical solution in terms of elementary quadrature rules. We show that conventional, non-stiff order conditions together with (problem-dependent) assumptions on the exact solution give full order of convergence. This involves regularity of the solution in \emph{space and time}. Our main result for Lawson methods is stated in Theorems~\ref{thm:lawson-order} and \ref{thm:lawson-error}. We prove that Lawson methods converge with order $p$, if the order of the underlying Runge--Kutta methods is at least $p$ and the solution satisfies appropriate regularity assumptions. These conditions are studied in detail for methods of orders one and two, respectively, and they are related to the corresponding conditions that arise in the analysis of splitting methods. In particular, this is worked out for the nonlinear Schr\"{o}dinger equation. Our error analysis also reveals a different behaviour between the first-order Lawson method and the exponential Euler method, which is visible in numerical experiments.

The outline of the paper is as follows. In Section~\ref{sec:lawson}, we recall the construction of Lawson methods. The expansion of the numerical and the exact solution in terms of elementary integrals is given in Section~\ref{sec:expansion}. There, we also introduce the analytic (finite dimensional) framework which typically occurs when discretizing a semilinear parabolic or hyperbolic PDE in space. Order conditions and convergence results are given in Section~\ref{sec:trees}. The resulting regularity assumptions are discussed in Section~\ref{sec:applications}. These assumptions are related to the corresponding conditions for splitting methods. Numerical examples that illustrate the required regularity assumptions and the proven convergence behaviour are also presented.

\section{Lawson methods}\label{sec:lawson}
Consider a semilinear system of stiff differential equations
\begin{equation}  \label{eq:semi-na}
  u'(t) + A u(t) = g\bigl(t, u(t) \bigr), \qquad u(0) = u_0,
\end{equation}
where the stiffness stems from the linear part of the equation, i.e., from $A$, which is either an unbounded linear operator or its spatial discretization, i.e., a matrix. The precise assumptions on $A$ and $g$ will be given in Section~\ref{sec:expansion}. For the numerical solution of \eqref{eq:semi-na}, \cite{Law67} considered the following change of variables:
\begin{equation*}
  w(t) = \ee^{tA} u(t).
\end{equation*}
Note that when applied to evolution equations, this transformation has to be done in a formal way, since $\ee^{t A}$ might not be a meaningful object in our general framework.

Inserting the new variables into \eqref{eq:semi-na} gives the transformed differential equation
\begin{equation}\label{eq:semi-w}
  \begin{aligned}
    w'(t) &= \ee^{tA} \bigl( u'(t)+ A u(t)\bigr)\\
    & = \ee^{tA} g\bigl(t,\ee^{-tA} w(t)\bigr),\qquad w(0)=u_0.
  \end{aligned}
\end{equation}
For the solution of this problem, an $\stages$-stage explicit Runge--Kutta method with coefficients $b_i,c_i, a_{ij}$ is considered. The method is assumed to satisfy the simplifying assumptions $c_1=0$ and
\begin{equation}\label{eq:simpa}
\sum_{j=1}^{i-1} a_{ij} = c_i,\qquad i=2,\ldots,\stages.
\end{equation}
Transforming the Runge--Kutta discretization of \eqref{eq:semi-w} back to the original variables yields the corresponding Lawson method for \eqref{eq:semi-na}
\begin{subequations}  \label{eq:lawson}
\begin{align}
  u_{n+1} & = \ee^{-\sw A} u_n + \sw \sum_{i=1}^\stages b_i \ee^{-(1-c_i)\sw A} G_{ni}, \label{eq:lawson-u1}\\
   G_{ni} & = g\bigl(t_n+c_i\sw, U_{ni}), \label{eq:lawson-Gni}\\
  U_{ni} & = \ee^{-c_i \sw A} u_n + \sw \sum_{j=1}^{i-1} a_{ij} \ee^{-(c_i-c_j)\sw A} G_{nj}, \qquad i=1,\ldots,\stages.   \label{eq:lawson-Ui}
\end{align}
\end{subequations}
Here, $u_n$ is the numerical approximation to the exact solution $u(t)$ at time $t=t_n=n \sw$, and $\sw$ is the step size. Note that this method makes explicit use of the action of the matrix exponential function. Depending on the properties of $A$, the nodes $c_1,\ldots,c_s$ have to fulfill  particular assumptions, see Assumption~\ref{ass:noframe} in the next section. Because of these actions of the matrix exponential, Lawson methods form a particular class of exponential integrators. For a review on such integrators, we refer to \cite{HocO10}.

For a non-stiff ordinary differential equation \eqref{eq:semi-na}, it is obvious that the order of the Runge--Kutta method applied to \eqref{eq:semi-w} coincides with that of the corresponding Lawson method applied to \eqref{eq:semi-na}. It is the aim of this paper to show that this is also true in the stiff situation, if appropriate
regularity assumptions hold (we will explain the meaning of regularity in the context of discretized PDEs in Section~\ref{sec:applications}).

\section{Expansion of the exact and the numerical solution} \label{sec:expansion}

By adding $t'=1$ to \eqref{eq:semi-na}, the differential equation is transformed to autonomous form. It is well known that Runge--Kutta methods of order at least one satisfying~\eqref{eq:simpa} are invariant under this transformation. Therefore, we restrict ourselves henceforth to the autonomous problem
\begin{equation}  \label{eq:semi-Lawson}
  u'(t) + A u(t) = g\bigl(u(t) \bigr), \qquad
    u(0) = u_0.
\end{equation}
Let $X$ be a Hilbert space or a Banach space with norm $\| \cdot \|$. Our main assumptions on $A$ and $g$ are as follows.
\begin{assumption}\label{ass:noframe}
  Let $A$  belong to a family $\mathcal F$ of linear operators on $X$ such that $-A$
  generates a group satisfying
  \begin{equation}\label{eq:mat-bound}
    \left\| \ee^{-tA}\right\| \le \CE,
  \end{equation}
  with a moderate constant $\CE$, uniformly for all $t \in \R$ and all operators $A\in \mathcal F$. It is
  sufficient to require that $-A$ generates a bounded semigroup (\IE,
  \eqref{eq:mat-bound} for $t\ge 0$),  if the nodes
  $c_i$ of the considered explicit Runge--Kutta method are ordered as
  $0=c_1\le c_2\le \ldots \le c_\stages\le 1$.
\end{assumption}

The set of infinitesimal generators of non-expansive (semi)groups in $X$ is a possible choice for the family $\mathcal F$. In addition, the above assumption is typically satisfied in situations where \eqref{eq:semi-Lawson} stems from a spatial discretization of a semilinear parabolic or hyperbolic partial differential equation. The important fact here is that the constant $\CE$ is independent of the spatial mesh width for finite difference and finite element methods, and independent of the number of ansatz functions in spectral methods. As our error bounds derived below do not depend on $A$ itself but only on the constant $\CE$, they also apply to spatially discretized systems.

\begin{assumption}\label{ass:noframeg}
  For a given integer $p\geq 0$, the nonlinearity $g$ is
  $p$ times  differentiable with bounded derivatives in a
  neighborhood of the solution of \eqref{eq:semi-Lawson}.
\end{assumption}

We recall that the solution of~\eqref{eq:semi-Lawson} can be represented in terms of the variation-of-constants formula
\begin{equation*}   
  u(\theta\sw) = \ee^{-\theta\sw A} u_0
   + \sw \int_0^\theta \ee^{-(\theta-\sigma)\sw A} g\bigl(u(\sigma\sw)\bigr) \dd \sigma.
\end{equation*}
Applying this formula recursively and expanding the nonlinearity along the flow of the homogeneous problem yields the following expansion of the exact solution
\begin{equation}\label{eq:exp-ex}
\begin{aligned}
  u(\sw) & =  \ee^{-\sw A} u_0 + \sw \int_0^1  \ee^{-(1 -\sigma) \sw A}
  g\Bigl( \ee^{-\sigma \sw A} u_0 + \sw \int_0^\sigma \ee^{-(\sigma-\eta) \sw A}
  g\bigl(u(\eta\sw)\bigr) \dd \eta \Bigl) \dd \sigma \\
     &=  \ee^{-\sw A} u_0 + \sw  \int_0^1 \ee^{-(1 -\sigma) \sw A}
       g_\sigma  \dd \sigma \\
     & \qquad +  \sw^2 \int_0^1  \ee^{-(1 -\sigma)\sw A}g'_\sigma
      \int_0^\sigma \ee^{-(\sigma-\eta) \sw A}  g_\eta \dd\eta\dd\sigma\\
     & \qquad +  \sw^3 \int_0^1 \ee^{-(1 -\sigma)\sw A}g'_\sigma
      \int_0^\sigma \ee^{-(\sigma-\eta) \sw A}  g'_\eta
      \int_0^\eta \ee^{-(\eta-\xi) \sw A}  g_\xi \dd\xi \dd\eta\dd\sigma\\
    & \qquad + \tfrac12 \sw^3  \int_0^1  \ee^{-(1 -\sigma) \sw A} g_\sigma''
      \Bigl( \int_0^\sigma \ee^{-(\sigma-\eta)\sw A}  g_\eta  \dd \eta,
       \int_0^\sigma \ee^{-(\sigma-\xi)\sw  A}  g_\xi \dd \xi
 \Bigr)\dd \sigma + \calO(\sw ^4),
\end{aligned}
\end{equation}
where we have used the shorthand notation
\begin{equation}\label{eq:gketa}
  g_\eta = g_\eta(u_0)= g\bigl(\ee^{-\eta \sw A} u_0 \bigr), \qquad
  g_\eta^{(k)} = g_\eta^{(k)}(u_0) = g^{(k)}\bigl(\ee^{-\eta \sw A} u_0 \bigr), \qquad k\geq 1.
\end{equation}
Note that here and throughout the whole section, the constant in the Landau symbol $\calO$ only depends on $\CE$ and the derivatives of $g$, but not explicitly on $A$ itself, i.e., not on the stiffness. Also note that this expansion differs considerably from the previous work (see, e.g., \cite{HocO05SIAM, LuaO13}) where the nonlinearity $g(u(t))$ was expanded with respect to $t$.

Next we perform a similar expansion of the numerical solution \eqref{eq:lawson}, which yields (again in the autonomous case)
\begin{equation}\label{eq:exp-num}
\begin{aligned}
  u_1 & = \ee^{-\sw A} u_0
    + \sw \sum_{i=1}^\stages b_i \ee^{-(1-c_i)\sw A} g_{c_i }\\
   & \qquad + \sw^2 \sum_{i=1}^\stages b_i \ee^{-(1-c_i)\sw A}
     g'_{c_i } \sum_{j=1}^{i-1} a_{ij}
                 \ee^{-(c_i-c_j)\sw A} g_{c_j }\\
   & \qquad + \sw^3 \sum_{i=1}^\stages b_i \ee^{-(1-c_i)\sw A}
     g'_{c_i } \sum_{j=1}^{i-1} a_{ij}
                 \ee^{-(c_i-c_j)\sw A} g'_{c_j }
                \sum_{k=1}^{j-1} a_{jk}
                 \ee^{-(c_j-c_k)\sw A} g_{c_k }\\
   & \qquad + \tfrac12 \sw^3 \sum_{i=1}^\stages b_i \ee^{-(1-c_i)\sw A}
     g''_{c_i } \Bigl( \sum_{j=1}^{i-1} a_{ij}
                 \ee^{-(c_i-c_j)\sw A} g_{c_j },
                \sum_{k=1}^{i-1} a_{ik}
                 \ee^{-(c_i-c_k)\sw A} g_{c_k }  \Bigr) + \calO(\sw^4).
\end{aligned}
\end{equation}
As we have used the variation-of-constants formula and its discrete counterpart, respectively, the expansions of the exact and the numerical solution reflect the well-known tree structure of (explicit) Runge--Kutta methods.
In the following we use the classic trees which are well-established for studying the non-stiff order conditions for Runge--Kutta methods, see \cite[Section~II.2]{HaiNW93}, \cite[Section~III.1]{HaiLW06}, and references given there.

By $\calT$ we denote the set of unlabeled rooted trees. We recall that these trees are defined recursively by
\begin{enumerate}
\item $\,\treeroot\, \in \calT$,
\item if $\tree_1,\ldots,\tree_k \in \calT$, then $[\tree_1,\ldots,\tree_k] \in \calT$.
\end{enumerate}
Here, $[\tree_1,\ldots,\tree_k]$ (a $k$ tuple without ordering) denotes the
tree which is obtained by concatenating the roots of the trees
$\tree_1,\ldots,\tree_k$ via $k$ branches with a new node. This node becomes
the root of the tree [$\tree_1,\ldots,\tree_k$].

For $\tree \in \calT$ the elementary differential $D(\tree)$ of a
smooth function $g$ is defined recursively in the following way. For $\tree = \treeroot$ we
have $D(\treeroot)(w) = g\bigl(w\bigr)$, and for $\tree=
[\tree_1,\ldots,\tree_k]$ we have
    \begin{equation*}
      D(\tree)(w) =
            g^{(k)}\bigl( w \bigr)
            \bigl( D(\tree_1)(w),\ldots,D(\tree_k)(w)\bigr).
    \end{equation*}
By $\varrho(\tree)$ we denote the order of the tree $\tree$ which is defined as the number of nodes of $\tree \in \calT$. The trees of order less or equal then $p$ are denoted by
\begin{equation*}
  \calT_p = \{ \tree \in \calT \mid \varrho(\tree) \le p \} .
\end{equation*}

Motivated by the expansion \eqref{eq:exp-ex} of the exact solution we define elementary integrals.
\begin{definition}  \label{def:Ge}
  For $\tree \in \calT$ and $0 \leq \zeta \leq 1$ we define
  the \emph{elementary integral} $\Ge_\zeta(\tree)$, its \emph{integrand} $\Psi_\zeta(\tree)$ and the \emph{multivariate integration operator} $\MvI{\zeta}{\tree}$ recursively
  in the following way.
  \begin{enumerate}
  \item For $\tree = \,\treeroot\,$ and a univariate function $f$, we set
    \begin{align*}
      \Psi_\zeta(\treeroot)(\sigma,w) &= \ee^{-(\zeta-\sigma) \sw A} g_\sigma\bigl( w\bigr),\\
      \MvI{\zeta}{\treeroot}f &= \int_0^\zeta f(\sigma)\dd\sigma.
    \end{align*}
  \item For $\tree= [\tree_1,\ldots,\tree_k]$ and a multivariate function $f$ in $\rho(\tree)$ variables, we set
    \begin{align*}
      \Psi_\zeta(\tree)(\sigma,\cdot_1,\ldots,\cdot_k,w) &= \ee^{-(\zeta-\sigma) \sw A}
            g_\sigma^{(k)}\bigl(w \bigr)
            \bigl(\Psi_\sigma(\tree_1)(\cdot_1,w),\ldots,\Psi_\sigma(\tree_k)(\cdot_k,w)\bigr), \\
      \MvI{\zeta}{\tree}f &= \int_0^\zeta \MvI{\sigma}{\tree_1}\cdots\MvI{\sigma}{\tree_k} f(\sigma,\cdot_1,\ldots,\cdot_k)\dd\sigma.
    \end{align*}
    Here, $\cdot_j$ refers to the variables corresponding to the
      $j$th subtree $\tree_j$.
  \end{enumerate}
Finally, we define for all $\tree\in\calT$ the \emph{elementary integrals} as
$$
\Ge_\zeta(\tree)(w) = \MvI{\zeta}{\tree} \Fe_\zeta(\tree) (\cdot,w)
$$
and set $\Psi(\tree) = \Psi_1(\tree)$, $\MvI{}{\tree}=\MvI{1}{\tree}$, and $\Ge(\tree) = \Ge_1(\tree)$.
\end{definition}

For example, we have $\treeba{1} = [\,\treeroot\,]$ and
\begin{equation*}
\Fe(\treeba{1}\,)(\sigma_1,\sigma_2,w) = \ee^{-(1-\sigma_1) \sw A}  g_{\sigma_1}'(w)\ee^{-(\sigma_1-\sigma_2) \sw A}  g_{\sigma_2}(w).
\end{equation*}

It is straightforward to verify that the elementary integrals satisfy the recurrence relation
\begin{align*}
      \Ge_\zeta(\treeroot)(w)  &= \int_0^\zeta \ee^{-(\zeta-\sigma) \sw A}
            g_\sigma\bigl(w\bigr) \dd \sigma,\\
      \Ge_\zeta(\tree)(w) &= \int_0^\zeta \ee^{-(\zeta-\sigma) \sw A}
            g_\sigma^{(k)}\bigl(w \bigr)
            \bigl(\Ge_\sigma(\tree_1)(w),\ldots,\Ge_\sigma(\tree_k)(w)\bigr) \dd \sigma
\end{align*}
for $\tree=[\tree_1,\ldots,\tree_k]$.

Our assumptions on  $g$ and $A$ ensure that the integrand $\Fe_\zeta(\tree)(\cdot,w)$
is bounded if $\tree \in \calT_{p+1}$ for $w$ in a neighborhood of the
exact solution of \eqref{eq:semi-Lawson} and $\sw$ sufficiently small.

\begin{remarkp}
  In the nonstiff situation, where $A\equiv 0$, all evaluations of $g$
  or its derivatives are at the fixed value $w$. Thus
  $\Ge_\zeta(\tree)(w)$ reduces to a multivariate integral over the
  constant integrand $\Fe_\zeta(\tree)(\cdot,w) \equiv D(\tree)(w)$.
\end{remarkp}

The following theorem shows how the expansion \eqref{eq:exp-ex} can
be expressed as a (truncated) \Bs. Here we use the notation from
\cite[Section~III.1]{HaiLW06}.


\begin{theorem}  \label{thm:uex}
  The exact solution of \eqref{eq:semi-na} satisfies
  \begin{equation}
    \label{eq:ex-expand-a}
    u(\zeta \sw) = \ee^{-\zeta \sw A} u_0 + \BsEx{p}{u_0}{\zeta}
        + \calO(\sw^{p+1}), \qquad \zeta \in [0,1],
  \end{equation}
  where  we define the \Bs \  for $w \in X$ and $\zeta \in [0,1]$ as
  \begin{align*}
    \BsEx{p}{w}{\zeta} =  \sum_{\tree \in \calT_p} \frac{\sw^{\varrho(\tree)}}{\sigma(\tree)} \Ge_\zeta(\tree)(w) ,
  \end{align*}
  with the symmetry coefficients $\sigma( \treeroot) = 1$ and $ \sigma([\tree_1,\ldots,\tree_k]) = \sigma(\tree_1)\cdots \sigma(\tree_k) \mu_1! \mu_2 ! \cdots$.  The integers $\mu_1,\mu_2, \ldots$ specify the number of equal trees among $\tree_1, \ldots, \tree_k$.
\end{theorem}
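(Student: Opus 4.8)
The plan is to establish \eqref{eq:ex-expand-a} by induction on $p$, unfolding the variation-of-constants formula and expanding the nonlinearity along the homogeneous flow, so that the explicit low-order terms in \eqref{eq:exp-ex} appear as the first instances of a general pattern. Writing the variation-of-constants formula at the intermediate parameter $\zeta$,
\begin{equation*}
  u(\zeta\sw) = \ee^{-\zeta\sw A}u_0 + \sw\int_0^\zeta \ee^{-(\zeta-\sigma)\sw A}\, g\bigl(u(\sigma\sw)\bigr)\dd\sigma,
\end{equation*}
the base case $p=0$ is immediate, since $\calT_0=\emptyset$ gives $\BsEx{0}{u_0}{\zeta}=0$, while the integral term is $\calO(\sw)$ by \eqref{eq:mat-bound} and Assumption~\ref{ass:noframeg}.

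For the inductive step I would assume that \eqref{eq:ex-expand-a} holds with $p$ replaced by $p-1$, uniformly for $\zeta\in[0,1]$, and write $u(\sigma\sw)=\ee^{-\sigma\sw A}u_0+r(\sigma)$ with $r(\sigma)=\BsEx{p-1}{u_0}{\sigma}+\calO(\sw^p)$, so that $r(\sigma)=\calO(\sw)$. Since $g$ is $p$ times differentiable, a Taylor expansion about $\ee^{-\sigma\sw A}u_0$ with remainder yields
\begin{equation*}
  g\bigl(u(\sigma\sw)\bigr) = \sum_{k=0}^{p-1}\frac{1}{k!}\, g_\sigma^{(k)}\bigl(r(\sigma),\ldots,r(\sigma)\bigr) + \calO(\sw^p),
\end{equation*}
with $k$ arguments in the $k$th term and the shorthand \eqref{eq:gketa}. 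Substituting the induction hypothesis for each factor $r(\sigma)$ and using the multilinearity of $g_\sigma^{(k)}$ produces a sum over ordered $k$-tuples $(\tree_1,\ldots,\tree_k)$ of subtrees from $\calT_{p-1}$. Multiplying by the outer factor $\sw\,\ee^{-(\zeta-\sigma)\sw A}$ and integrating, the recurrence for the elementary integrals stated after Definition~\ref{def:Ge} shows that each tuple reproduces exactly $\Ge_\zeta([\tree_1,\ldots,\tree_k])(u_0)$; since $\varrho([\tree_1,\ldots,\tree_k])=1+\sum_i\varrho(\tree_i)$, the accompanying power of $\sw$ is $\sw^{\varrho(\tree)}$ for $\tree=[\tree_1,\ldots,\tree_k]$.

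The one point requiring care is the matching of the coefficients. Each ordered tuple contributes the factor $\frac{1}{k!}\,\sigma(\tree_1)^{-1}\cdots\sigma(\tree_k)^{-1}$, whereas $\Ge_\zeta([\tree_1,\ldots,\tree_k])$ depends only on the unordered tree $\tree$. For a fixed $\tree=[\tree_1,\ldots,\tree_k]$ whose distinct subtrees occur with multiplicities $\mu_1,\mu_2,\ldots$, precisely $k!/(\mu_1!\mu_2!\cdots)$ ordered tuples yield $\tree$, and summing their equal contributions gives
\begin{equation*}
  \frac{k!}{\mu_1!\mu_2!\cdots}\cdot\frac{1}{k!}\cdot\frac{1}{\sigma(\tree_1)\cdots\sigma(\tree_k)} = \frac{1}{\sigma(\tree)},
\end{equation*}
by the very definition of the symmetry coefficient. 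Collecting all tuples that assemble a given $\tree\in\calT_p$ therefore reproduces exactly the term $\sw^{\varrho(\tree)}\sigma(\tree)^{-1}\Ge_\zeta(\tree)(u_0)$ of $\BsEx{p}{u_0}{\zeta}$.

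I expect the accounting of the truncation errors, rather than this algebra, to be the most delicate part of the argument. One must verify that every discarded contribution is genuinely $\calO(\sw^{p+1})$: the Taylor remainder, the $\calO(\sw^p)$ term carried by $r(\sigma)$, and all tuples with $\sum_i\varrho(\tree_i)>p-1$ each gain a power of $\sw$ from the outer integral and hence fall below the truncation level. Because every subtree has order at least one, a tree in $\calT_p$ can have at most $p-1$ children, so the Taylor sum is finite and exactly $p$ derivatives of $g$ are needed, in accordance with Assumption~\ref{ass:noframeg}. Finally, the boundedness of the integrands $\Fe_\zeta(\tree)$ for $\tree\in\calT_{p+1}$ noted after Definition~\ref{def:Ge}, together with \eqref{eq:mat-bound}, ensures that all remainder integrals are bounded uniformly in $\sw$ and independently of $A$, so that the constants in the $\calO$-terms depend only on $\CE$ and the derivatives of $g$, as required.
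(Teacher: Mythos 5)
Your proof is correct and follows essentially the same route as the paper, which establishes the base case $p=0$ via the variation-of-constants formula and then refers the induction step to the argument of \cite[Lemma~III.1.9]{HaiLW06} with the same modifications you carry out (expansion along the homogeneous flow, truncation after $p$ derivatives of $g$). You have simply written out in full the details the paper omits, including the combinatorial matching of the symmetry coefficients and the bookkeeping of the $\calO(\sw^{p+1})$ remainders.
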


\begin{proof}
The proof is done by induction. For $p=0$, the claim follows from the variation-of-constants formula and $\BsEx{0}{u_0}{\zeta} = 0$, since
  \begin{equation}  \label{eq:uzetatau}
    u(\zeta \sw)  = \ee^{-\zeta \sw A} u_0 + \sw \int_0^\zeta  \ee^{-(\zeta -\sigma) \sw A} g\bigl(u(\sigma \sw)\bigr) \dd \sigma = \ee^{-\zeta \sw A} u_0 
    + \calO(\sw) .
  \end{equation}
The induction step follows the lines of the proof of \cite[Lemma~III.1.9]{HaiLW06} with the following modifications: we use the variation-of-constants formula and truncate the series in such a way that only the first $p$ derivatives of $g$ enter the expansion. We omit the details.
\end{proof}

Now we proceed analogously for the numerical solution starting with the definition of elementary quadrature rules.

\begin{definition}  \label{def:Gn}
For $\tree \in \calT$ we define the \emph{multivariate quadrature
  operators} $\MvQ{\tree}$  and $ \MvQi{i}{\tree}$, $i=1,\ldots,s$,
 recursively in the following way.
\begin{enumerate}
    \item For $\tree = \,\treeroot\,$ and a univariate function $f$, we set
      \begin{align*}
      \MvQ{\treeroot}f = \sum_{j=1}^s b_j f(c_j),\qquad
      \MvQi{i}{\treeroot}f = \sum_{j=1}^{i-1} a_{ij} f(c_j),\quad 1\le i \le s.
      \end{align*}
    \item For $\tree= [\tree_1,\ldots,\tree_k]$ and a multivariate function $f$ in $\rho(\tree)$ variables, we set
      \begin{align*}
      \MvQ{\tree}f &= \sum_{j=1}^s b_j \MvQi{j}{\tree_1}\cdots\MvQi{j}{\tree_k} f(c_j,\cdot_1,\ldots,\cdot_k),\\
      \MvQi{i}{\tree}f &= \sum_{j=1}^{i-1} a_{ij} \MvQi{j}{\tree_1}\cdots\MvQi{j}{\tree_k} f(c_j,\cdot_1,\ldots,\cdot_k),\quad 1\le i \le s.
      \end{align*}
    \end{enumerate}
Finally, we define the \emph{elementary quadrature rules} in the following way
    \begin{align*}
      \Gn(\tree)(w) = \MvQ{\tree} \Fe(\tree)(\cdot,w),\qquad
      \Gn_i(\tree)(w) = \MvQi{i}{\tree} \Fe_{c_i}(\tree)(\cdot,w), \quad  1\le i\le s.
    \end{align*}
\end{definition}

It is easy to see from the recursive definitions that the elementary quadrature rules satisfy
      \begin{align*}
        \Gn(\treeroot)(w)
        &= \sum_{j=1}^\stages b_j  \ee^{-(1-c_j)\sw A}
          g_{c_j}\bigl( w \bigr),\\
        \Gn_i(\treeroot)(w)
        &= \sum_{j=1}^{i-1} a_{ij}  \ee^{-(c_i-c_j)\sw A}
          g_{c_j}\bigl( w \bigr), \quad  1\le i\le s
      \end{align*}
and
      \begin{align*}
        \Gn(\tree)(w)
        &= \sum_{j=1}^\stages b_j  \ee^{-(1-c_j)\sw A}
          g_{c_j}^{(k)} \bigl( w \bigr)
          \bigl( \Gn_j(\tree_1)(w),\ldots,\Gn_j(\tree_k)(w)\bigr), \\
        \Gn_i(\tree)(w)
        &= \sum_{j=1}^{i-1} a_{ij}  \ee^{-(c_i-c_j)\sw A}
          g_{c_j}^{(k)} \bigl( w \bigr)
          \bigl( \Gn_j(\tree_1)(w),\ldots,\Gn_j(\tree_k)(w)\bigr), \quad  1\le i\le s
      \end{align*}
for $\tree= [\tree_1,\ldots,\tree_k]$. This allows us to express the expansion \eqref{eq:exp-num} of
the numerical solution in terms of elementary quadrature rules.

\begin{theorem}   \label{thm:lawson-expand}
  The numerical solution of \eqref{eq:semi-Lawson}  satisfies
  \begin{subequations}
  \begin{align}
    \label{eq:num-expand-a}
    u_{1} &= \ee^{- \sw A} u_0 + \BsLaw{p}{u_0}
        + \calO(\sw^{p+1}), \\
       \label{eq:num-expand-a-inner}
    U_{0i} &= \ee^{- c_i \sw A} u_0 + \BsLawIn{p}{u_0} +  \calO(\sw^{p+1}), \qquad i = 1,\ldots,s    ,
  \end{align}
  \end{subequations}
  where we define the numerical \Bs \  for $w \in X$  as
  \begin{align*}
    \BsLaw{p}{w} &=  \sum_{\tree \in \calT_p}
                   \frac{\sw^{\varrho(\tree)}}{\sigma(\tree)}
                   \Gn(\tree)(w), \\
                   \BsLawIn{p}{w} &=  \sum_{\tree \in \calT_p} \frac{\sw^{\varrho(\tree)}}{\sigma(\tree)} \Gn_i(\tree)(w),
                   \quad i=1,\ldots,s.
  \end{align*}
\end{theorem}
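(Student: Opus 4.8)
The plan is to establish the two expansions \eqref{eq:num-expand-a} and \eqref{eq:num-expand-a-inner} \emph{simultaneously} by induction on the truncation order $p$, running completely in parallel to the proof of Theorem~\ref{thm:uex} (and hence to \cite[Lemma~III.1.9]{HaiLW06}), but with every integration operator $\MvI{\zeta}{\tree}$ replaced by its discrete counterpart $\MvQ{\tree}$ or $\MvQi{i}{\tree}$. The guiding observation is that the internal-stage recursion \eqref{eq:lawson-Ui} (taken at $n=0$, in the autonomous form \eqref{eq:semi-Lawson}) is the exact discrete analogue of the variation-of-constants formula underlying \eqref{eq:exp-ex}: the integral $\sw\int_0^{c_i}\ee^{-(c_i-\sigma)\sw A}g(u(\sigma\sw))\dd\sigma$ is replaced by $\sw\sum_{j=1}^{i-1}a_{ij}\ee^{-(c_i-c_j)\sw A}G_{0j}$, with the stage value $U_{0j}$ playing the role of $u(c_j\sw)$ and $G_{0j}=g(U_{0j})$. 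For $p=0$ both numerical \Bs{} are empty, since $\calT_0=\emptyset$, and the uniform bound \eqref{eq:mat-bound} together with Assumption~\ref{ass:noframeg} gives $U_{0i}=\ee^{-c_i\sw A}u_0+\calO(\sw)$ and $u_1=\ee^{-\sw A}u_0+\calO(\sw)$ directly; this is the base case.

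For the induction step I assume that \eqref{eq:num-expand-a-inner} holds for \emph{all} stages with $p$ replaced by $p-1$, and insert it into $G_{0j}=g(U_{0j})$. Writing $U_{0j}=\ee^{-c_j\sw A}u_0+R_j$ with $R_j=\BsLawInj{p-1}{u_0}+\calO(\sw^{p})$ and Taylor-expanding $g$ about the flow-shifted argument $\ee^{-c_j\sw A}u_0$ gives, in the shorthand \eqref{eq:gketa},
\[
  G_{0j}=g_{c_j}(u_0)+\sum_{k\ge 1}\tfrac1{k!}\,g_{c_j}^{(k)}(u_0)\bigl(R_j,\ldots,R_j\bigr)+\calO(\sw^{p}).
\]
The heart of the argument is now purely combinatorial: substituting the multilinear expansion of $R_j$ and grouping the resulting ordered $k$-tuples of subtrees $(\tree_1,\ldots,\tree_k)$ into the unordered tree $\tree=[\tree_1,\ldots,\tree_k]$. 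Each such tree arises from exactly $k!/(\mu_1!\mu_2!\cdots)$ orderings, where $\mu_1,\mu_2,\ldots$ count the equal subtrees, so the prefactor $1/k!$ combines with $1/(\sigma(\tree_1)\cdots\sigma(\tree_k))$ to yield precisely $1/\sigma(\tree)$, by the definition $\sigma([\tree_1,\ldots,\tree_k])=\sigma(\tree_1)\cdots\sigma(\tree_k)\,\mu_1!\mu_2!\cdots$. Collecting powers of $\sw$ (each subtree $\tree_\ell$ carries $\sw^{\varrho(\tree_\ell)}$, so the product carries $\sw^{\varrho(\tree)-1}$) yields
\[
  G_{0j}=g_{c_j}(u_0)+\sum_{\tree=[\tree_1,\ldots,\tree_k]\in\calT_p}\frac{\sw^{\varrho(\tree)-1}}{\sigma(\tree)}\,g_{c_j}^{(k)}(u_0)\bigl(\Gn_j(\tree_1)(u_0),\ldots,\Gn_j(\tree_k)(u_0)\bigr)+\calO(\sw^{p}).
\]

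It then remains to feed this expansion back through the recursions \eqref{eq:lawson-Ui} and \eqref{eq:lawson-u1}. Multiplying the expansion of $G_{0j}$ by $\sw\,a_{ij}\ee^{-(c_i-c_j)\sw A}$ and summing over $j=1,\ldots,i-1$ reproduces, term by term, the recursion for $\Gn_i(\tree)$ from Definition~\ref{def:Gn}: the root tree gives $\sum_{j<i}a_{ij}\ee^{-(c_i-c_j)\sw A}g_{c_j}(u_0)=\Gn_i(\treeroot)(u_0)$, while $\tree=[\tree_1,\ldots,\tree_k]$ gives $\sum_{j<i}a_{ij}\ee^{-(c_i-c_j)\sw A}g_{c_j}^{(k)}(u_0)\bigl(\Gn_j(\tree_1)(u_0),\ldots,\Gn_j(\tree_k)(u_0)\bigr)=\Gn_i(\tree)(u_0)$. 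Since the extra factor $\sw$ raises $\varrho(\tree)-1$ to $\varrho(\tree)$, this establishes \eqref{eq:num-expand-a-inner} at level $p$; the identical computation with $b_j$ and $\ee^{-(1-c_j)\sw A}$ in place of $a_{ij}$ and $\ee^{-(c_i-c_j)\sw A}$ produces $\Gn(\tree)$ and hence \eqref{eq:num-expand-a}.

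The step where care is genuinely needed — the one new feature compared with classical \Bs{} theory — is that the elementary differentials here are evaluated at the \emph{node-dependent, flow-shifted} arguments $\ee^{-c_j\sw A}u_0$ rather than at a single fixed point. The definitions of the integrands $\Fe_{c_i}(\tree)$ and of the quadrature operators $\MvQi{i}{\tree}$ are tailored precisely so that the Taylor expansion of $g(U_{0j})$ about $\ee^{-c_j\sw A}u_0$ reproduces exactly the weights $g_{c_j}^{(k)}$ and the telescoping exponential factors $\ee^{-(c_i-c_j)\sw A}$; verifying this consistency, so that the discrete operators $\MvQi{i}{\tree}$ compose under the stage recursion exactly as the operators $\MvI{\zeta}{\tree}$ compose under the variation-of-constants formula, is the only point requiring attention. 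The remaining bookkeeping — that the $\calO(\sw^{p})$ remainder in each $R_j$ contributes, after multiplication by $\sw$, only at order $\sw^{p+1}$, uniformly in $A\in\famF$ by \eqref{eq:mat-bound} — is routine and proceeds exactly as in Theorem~\ref{thm:uex}.
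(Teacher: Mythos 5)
Your proof is correct and follows essentially the same route the paper intends: the paper's own proof is a one-line reference to the argument of Theorem~\ref{thm:uex} (i.e.\ the induction of \cite[Lemma~III.1.9]{HaiLW06} with the stage recursion \eqref{eq:lawson-Ui} playing the role of the variation-of-constants formula), and your induction on $p$ with the Taylor expansion of $g$ about the flow-shifted stage values $\ee^{-c_j\sw A}u_0$ and the $k!/(\mu_1!\mu_2!\cdots)$ bookkeeping for $\sigma(\tree)$ is exactly that argument written out in full. No gaps.
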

\begin{proof}
The proof is done analogously to the proof of Theorem~\ref{thm:lawson-expand}.
\end{proof}

\section{Order conditions and convergence} \label{sec:trees}

In this section we present a systematic way of deriving general stiff
convergence results for Lawson methods based on trees.

The expansions of the exact and the numerical solution
in terms of elementary integrals and elementary quadrature rules
derived in the previous section allow us to study the local error in the same way as for
classical Runge--Kutta methods.
In fact we show that the orders of these quadrature rules determine
the local error of the Lawson method. A similar strategy was used in
the analysis of splitting methods by \cite{JahL00}.  General stiff
order conditions for exponential Runge--Kutta methods have been
derived in \cite{LuaO13} and for splitting methods in \cite{HanO16}.

As usual, we say that the Lawson method is of (stiff) order $p$ if the local
error satisfies
\begin{equation*}
  \norm{u(\sw) - u_1} \leq C \sw^{p+1}
\end{equation*}
uniformly for smooth nonlinearities and operators $A$ satisfying Assumption~\ref{ass:noframe},
meaning that the constant $C$ depends on the constant $\CE$ defined in
\eqref{eq:mat-bound} but not  on $A$ itself.

\begin{theorem}  \label{thm:order}
  The Lawson method \eqref{eq:lawson} is of order $p$ if
  \begin{equation*}
    \Gn(\tree)(u_0) - \Ge(\tree)(u_0) = \calO(\sw^{p+1-\varrho(\tree)}),
    \qquad \text{for all } \tree \in \calT_p.
  \end{equation*}
\end{theorem}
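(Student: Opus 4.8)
The plan is to compare the two B-series established in Theorems~\ref{thm:uex} and \ref{thm:lawson-expand}, namely
\[
  u(\sw) = \ee^{-\sw A} u_0 + \BsEx{p}{u_0}{1} + \calO(\sw^{p+1}),
  \qquad
  u_1 = \ee^{-\sw A} u_0 + \BsLaw{p}{u_0} + \calO(\sw^{p+1}),
\]
and to observe that both carry the \emph{same} leading term $\ee^{-\sw A} u_0$ and the same dangling $\calO(\sw^{p+1})$ remainder. Subtracting, the local error reduces to the difference of the two truncated series,
\[
  u(\sw) - u_1
    = \sum_{\tree \in \calT_p} \frac{\sw^{\varrho(\tree)}}{\sigma(\tree)}
         \bigl( \Ge(\tree)(u_0) - \Gn(\tree)(u_0) \bigr)
       + \calO(\sw^{p+1}).
\]
First I would invoke these two expansion theorems to write down exactly this identity, so that everything hinges on the termwise differences $\Ge(\tree)(u_0) - \Gn(\tree)(u_0)$ indexed by $\tree \in \calT_p$.

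The second step is purely a bookkeeping argument on powers of $\sw$. Each tree $\tree$ contributes its difference premultiplied by $\sw^{\varrho(\tree)}$. By hypothesis, for every $\tree \in \calT_p$ we have
\[
  \Gn(\tree)(u_0) - \Ge(\tree)(u_0) = \calO\bigl(\sw^{\,p+1-\varrho(\tree)}\bigr),
\]
so the corresponding summand is of size $\sw^{\varrho(\tree)} \cdot \sw^{\,p+1-\varrho(\tree)} = \sw^{p+1}$. Since $\calT_p$ is a finite set and the symmetry coefficients $\sigma(\tree)$ are fixed positive integers, the finite sum of terms that are each $\calO(\sw^{p+1})$ is again $\calO(\sw^{p+1})$. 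Combined with the already-present $\calO(\sw^{p+1})$ remainder from the two expansions, this yields $\norm{u(\sw) - u_1} \le C\sw^{p+1}$, which is precisely the definition of (stiff) order $p$ stated just before the theorem.

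The one point requiring care—and the main obstacle—is the uniformity of the constant $C$ in the operator $A$. The conclusion is meaningful only because the implicit constants in all the $\calO$-symbols depend solely on $\CE$ from Assumption~\ref{ass:noframe} and on bounds for the derivatives of $g$ from Assumption~\ref{ass:noframeg}, but not on $A$ itself. I would emphasise that this uniformity is inherited from the two expansion theorems (where the remainders were already controlled by $\CE$ and the $g$-derivatives) and from the hypothesis, whose $\calO$-bounds must likewise be understood uniformly over the family $\mathcal F$. Granting that reading of the hypothesis, the three ingredients—the identity for $u(\sw)-u_1$, the per-tree order estimate, and the finiteness of $\calT_p$—combine immediately, and no further estimation is needed.
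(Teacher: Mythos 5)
Your proposal is correct and follows exactly the paper's own argument: subtract the two B-series expansions from Theorems~\ref{thm:uex} and \ref{thm:lawson-expand} to obtain \eqref{eq:local-err-Bseries}, then note that each of the finitely many summands is $\sw^{\varrho(\tree)}\cdot\calO(\sw^{p+1-\varrho(\tree)})=\calO(\sw^{p+1})$. Your added remarks on the uniformity of the constants in $A$ are consistent with the paper's standing conventions and fill in what the paper leaves implicit.
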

\begin{proof}
  From Theorems~\ref{thm:uex} and \ref{thm:lawson-expand} we have
  \begin{equation} \label{eq:local-err-Bseries}
    u(\sw) - u_1 =
    \sum_{\tree \in \calT_p} \frac{\sw^{\varrho(\tree)}}{\sigma(\tree)}
    \big(\Ge(\tree)(u_0)-\Gn(\tree)(u_0) \big) + \calO(\sw^{p+1}).
  \end{equation}
  This proves the statement.
\end{proof}

\begin{remarkp}
  The above derivation can be easily generalized to exponential
  integrators with a fixed linearization
  \begin{align*}
    U_i & = \ee^{-c_i \sw A} u_0
      + \sw \sum_{j=1}^{i-1} a_{ij}(-\sw A) g(U_j),
     \qquad i=1,\ldots,\stages,  
    \\
    u_1 & = \ee^{-\sw A} u_0
      + \sw \sum_{i=1}^\stages b_i(-\sw A) g(U_i),
  \end{align*}
  cf.\ \cite{HocO10}.  If one
  replaces $b_i \ee^{-(1-c_i)\sw A}$ by $b_i(-\sw A)$ and
  $a_{ij} \ee^{-(c_i-c_j)\sw A}$ by $a_{ij}(-\sw A)$ in
  Definition~\ref{def:Gn}, Theorems~\ref{thm:lawson-expand} and
  \ref{thm:order} also hold for general exponential Runge--Kutta
  methods.  If the stiff order conditions derived in
  \cite{HocO05SIAM} and \cite{LuaO13}
   are satisfied up to order $p$, then
  $\Gn(\tree)(u_0) - \Ge(\tree)(u_0) = \calO(\sw^{p+1-\varrho(\tree)})$
  for all $ \tree \in \calT_p$.
\end{remarkp}

\begin{example}
  For the exponential Euler method, where $s=1$,
  $c_1=0$, and $b_1(z) = \varphi_1(z)$, we have
  \begin{equation*}
    \Gn(\treeroot)(u_0) = b_1(-\sw A) g(u_0) = \varphi_1(-\sw A) g(u_0)
    = \int_0^1 \ee^{-(1-\sigma)\sw A}  g(u_0) \dd\sigma
  \end{equation*}
  and thus
    \begin{align*}
      \Gn(\treeroot)(u_0) - \Ge(\treeroot)(u_0) &=
           \int_0^1 \ee^{-(1-\sigma)\sw A} \bigl( g(u_0) -  g_\sigma(u_0)\bigr)
                \dd\sigma.
    \end{align*}
    The condition for order one
    requires that  $\norm{g(u_0) - g_\sigma(u_0)} \leq C \sw$.
    In the linear case, where $g(u)=Bu$, this can be written as
    \begin{equation}  \label{eq:euler-cond}
      \sw^{-1} \bigl(g(u_0) - g_\sigma(u_0)\bigr)
         = \sw^{-1} B \bigl( I - \ee^{-\sigma \sw A} \bigr) u_0
         =  B \varphi_1(-\sigma \sw A) \sigma A u_0.
    \end{equation}
    Hence the condition is fulfilled if $Au_0$ is uniformly bounded, \IE,
    $u_0 \in \calD(A)$. For the convergence, we thus need
    $u(t) \in \calD(A)$ for $t\in [0,T]$.

    It might be interesting to compare \eqref{eq:euler-cond}
    to the condition given in
    \cite[Lemma~2.13]{HocO10}
    which was proved by a Taylor series
    expansion of $g\bigl(u(t)\bigr)$. For linear problems, it reads
    \begin{equation}
      \label{eq:euler-acta}
      \norm{ B (Au(t)+Bu(t))} \leq C.
    \end{equation}
    Hence both results require the same regularity, namely that $Au(t)$
    is uniformly bounded. Note, however, that \eqref{eq:euler-acta}
    does not involve the $\varphi_1$ function. The latter decays like
    $1/z$ as $z\to\infty$ in the closed left half-plane, hence
    components corresponding to eigenvalues with large negative real
    part are damped.
\end{example}

\begin{corollaryp}  \label{cor:rkm-qf}
  If the underlying Runge--Kutta method is of
  (conventional) order $p$ then
  \begin{equation} \label{eq:Gn-error}
    \MvI{}{\tree} 1 =  \MvQ{\tree} 1
    \qquad \text{for all } \tree \in \calT_p,
  \end{equation}
  where $1: [0,1]^{\varrho(\tree)} \to \R: \sigma \mapsto 1$ denotes the multivariate constant function with value one.
  %
\end{corollaryp}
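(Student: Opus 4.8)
The plan is to reduce the identity \eqref{eq:Gn-error} to the classical Butcher order conditions for Runge--Kutta methods. The crucial observation is that $\MvI{}{\tree}1$ and $\MvQ{\tree}1$ are obtained by applying the operators of Definitions~\ref{def:Ge} and~\ref{def:Gn} to the \emph{constant} integrand $1$. Since the factors $\ee^{-\cdot\,\sw A}$ enter only through the integrands $\Fe$ and not through the operators $\MvI{}{\tree}$, $\MvQ{\tree}$ themselves, both sides of \eqref{eq:Gn-error} are purely scalar quantities depending only on the tree $\tree$ and the coefficients $b_i$, $a_{ij}$, $c_i$. I would therefore identify $\MvI{}{\tree}1$ with the reciprocal density $1/\gamma(\tree)$ and $\MvQ{\tree}1$ with the classical elementary weight $\Phi(\tree)$, where $\gamma(\tree)$ denotes the tree density in the sense of \cite[Section~III.1]{HaiLW06}.

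For the integration side I would prove, by induction over the tree structure, the slightly stronger statement $\MvI{\zeta}{\tree}1=\zeta^{\varrho(\tree)}/\gamma(\tree)$ for all $\zeta\in[0,1]$. The base case $\MvI{\zeta}{\treeroot}1=\int_0^\zeta 1\,\dd\sigma=\zeta$ is immediate. In the step for $\tree=[\tree_1,\ldots,\tree_k]$, the inner operators applied to the constant function contribute the product $\prod_{l}\sigma^{\varrho(\tree_l)}/\gamma(\tree_l)=\sigma^{\varrho(\tree)-1}/\prod_l\gamma(\tree_l)$, and the single outer integration over $[0,\zeta]$ raises the exponent to $\varrho(\tree)$ while dividing by it. This reproduces exactly the recursion $\gamma(\tree)=\varrho(\tree)\prod_l\gamma(\tree_l)$ for the density, and setting $\zeta=1$ yields $\MvI{}{\tree}1=1/\gamma(\tree)$.

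The quadrature side is handled analogously, carrying along the auxiliary internal operators $\MvQi{i}{\tree}$: an induction shows that $\MvQi{i}{\tree}1$ are the internal weights and $\MvQ{\tree}1=\Phi(\tree)$. Here the base case $\MvQi{i}{\treeroot}1=\sum_{j<i}a_{ij}$ equals $c_i$ by the simplifying assumption~\eqref{eq:simpa}, which is exactly what brings $\MvQ{\tree}1$ into the familiar form of the elementary weights written in terms of $b_i$, $a_{ij}$, $c_i$; for instance, for $\tree=[\,\treeroot\,]$ one obtains $\MvQ{\tree}1=\sum_j b_j c_j$. With both identifications established, \eqref{eq:Gn-error} reads $\Phi(\tree)=1/\gamma(\tree)$ for all $\tree\in\calT_p$, which is precisely the classical order condition theorem; since the underlying Runge--Kutta method is assumed to have conventional order $p$, these identities hold for every tree of order at most $p$ (see \cite[Section~II.2]{HaiNW93} and \cite[Section~III.1]{HaiLW06}).

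The argument is essentially bookkeeping, so I do not expect a genuine obstacle. The one point demanding care is the quadrature recursion: one must keep the internal operators $\MvQi{i}{\tree}$ in play throughout and invoke \eqref{eq:simpa} at the leaves, so that the resulting expression coincides term by term with the classical elementary weight rather than with an unsimplified sum over the $a_{ij}$. Once this correspondence is pinned down, the result follows immediately from the classical theory.
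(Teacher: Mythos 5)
Your proposal is correct, but it takes a genuinely different route from the paper. The paper argues indirectly: it specializes to $A\equiv 0$, observes that the integrand then reduces to the constant $D(\tree)(w)$, so that $\Ge(\tree)(w)=\bigl(\MvI{}{\tree}1\bigr)D(\tree)(w)$ and $\Gn(\tree)(w)=\bigl(\MvQ{\tree}1\bigr)D(\tree)(w)$, and then invokes the classical $\calO(\sw^{p+1})$ local-error bound together with the independence of the elementary differentials to force the two scalar prefactors to coincide --- without ever computing what they are. You instead compute both scalars explicitly by structural induction, identifying $\MvI{}{\tree}1=1/\gamma(\tree)$ via the intermediate identity $\MvI{\zeta}{\tree}1=\zeta^{\varrho(\tree)}/\gamma(\tree)$ and $\MvQ{\tree}1=\Phi(\tree)$ (using \eqref{eq:simpa} at the leaves), and then cite the classical theorem that conventional order $p$ is equivalent to $\Phi(\tree)=1/\gamma(\tree)$ for all $\tree\in\calT_p$. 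Both arguments ultimately rest on the same underlying fact --- the independence of the elementary differentials, which is what makes the Butcher conditions necessary and not merely sufficient --- so neither is logically weaker. Your version is longer but more informative: it yields the explicit values of the two functionals and a $\zeta$-dependent identity that could be reused; the paper's version is shorter and avoids introducing $\gamma$ and $\Phi$ at all. Your inductive bookkeeping checks out (in particular $\sum_l\varrho(\tree_l)=\varrho(\tree)-1$ and the density recursion $\gamma(\tree)=\varrho(\tree)\prod_l\gamma(\tree_l)$ are used correctly), so there is no gap.
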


\begin{proof}
  First note that for problems with $A\equiv 0$, we have
  \begin{equation*}
    \Ge(\tree)(w) = \MvI{}{\tree} D(\tree)(w),
    \qquad
    \Gn(\tree)(w) = \MvQ{\tree} D(\tree)(w). 
  \end{equation*}
  On the one hand, classical Runge--Kutta theory implies that the local error \eqref{eq:local-err-Bseries} behaves as $\calO(\sw^{p+1})$ for any
  sufficiently smooth $g$.
  On the other hand, the elementary differentials $D(\tree)$ are known to be independent.
  Hence, we obtain that $\Ge(\tree)(w) = \Gn(\tree)(w)$. The
  statement follows because
  the integrand $\Fe(\tree)(\cdot,w) \equiv D(\tree)(w)$ is a constant.
  This yields \eqref{eq:Gn-error}.
  %
\end{proof}

Since the convergence analysis of Lawson methods also employs Taylor expansion, we next study quadrature of monomials. For $\sigma =(\sigma_1,\ldots,\sigma_q) \in [0,1]^q$ and a given vector $\kappa = (\kappa_1, \ldots, \kappa_q )\in\N_0^q$ of non-negative integers, we define as usual
\begin{align*}
\sigma^\kappa = \sigma_1^{\kappa_1}\cdot\ldots\cdot\sigma_q^{\kappa_q}
\end{align*}
and set $\kappa ! = \kappa_1 ! \cdots \kappa_q!$ and $|\kappa| = \kappa_1+\ldots +\kappa_q$. Moreover, we denote the $q$-variate monomial function of degree $|\kappa|$ by
$$
\monomz^\kappa : [0,1]^q\to \R: \sigma \mapsto \sigma^\kappa.
$$
We note that $\monomz^\kappa = \monomz_1^{\kappa_1}\ldots \monomz_q^{\kappa_q}$ and $\monomz^0 = 1$.

It turns out that a multivariate integration (or quadrature)  w.r.t.\ $\tree$ of such monomials corresponds to the integration (or quadrature) of the constant one function w.r.t.\ a particular higher order tree stemming from $\tree$.

\begin{lemmap} \label{lem:tree+mon=bigtree}
Let $\tree \in \calT$ and $\kappa \in \N_0^{\varrho(\tree)}$. Labeling the nodes of $\tree$ with the numbers $\{1,\ldots,\rho(\tree)\}$ (in an arbitrary order) we denote by $\NodeToBush{\kappa}{\tree} \in \calT$ the tree stemming from $\tree$ where $\kappa_j$ leafs are added to its $j$th node. Then $\varrho(\NodeToBush{\kappa}{\tree}) = \varrho(\tree) + \abs{\kappa}$ and
  \begin{align*}
    \MvI{\zeta}{\tree} \monomz^\kappa =
    \MvI{\zeta}{\NodeToBush{\kappa}{\tree}}1,
    \qquad
    \MvQ{\tree} \monomz^\kappa = \MvQ{\NodeToBush{\kappa}{\tree}}1 ,
     \qquad
    \MvQi{i}{\tree} \monomz^\kappa &= \MvQi{i}{\NodeToBush{\kappa}{\tree}}1, \qquad i= 1,\ldots,s.
  \end{align*}
  %
\end{lemmap}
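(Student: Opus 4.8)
The plan is to prove all three identities simultaneously by structural induction on $\tree$, the single substantive ingredient being the simplifying assumption \eqref{eq:simpa}. The order formula $\varrho(\NodeToBush{\kappa}{\tree}) = \varrho(\tree) + \abs{\kappa}$ is immediate, since $\NodeToBush{\kappa}{\tree}$ is obtained from $\tree$ by adjoining $\abs{\kappa} = \sum_j \kappa_j$ new leaves, each of which contributes exactly one node. Each identity is to be read with one fixed labeling used consistently on both sides; for definiteness I take the labeling induced by the recursive structure, so that the first variable of $\monomz^\kappa$ corresponds to the root of $\tree$ and the remaining variables to the nodes of $\tree_1, \ldots, \tree_k$ in order.

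\emph{Base case} $\tree = \treeroot$. Here $\kappa = (\kappa_1)$, $\monomz^\kappa(\sigma) = \sigma^{\kappa_1}$, and $\NodeToBush{\kappa}{\treeroot} = \bush{\kappa_1}$ is a root carrying $\kappa_1$ leaves. Unfolding the definitions gives $\MvI{\zeta}{\bush{\kappa_1}}1 = \int_0^\zeta \prod_{l=1}^{\kappa_1}\bigl(\MvI{\sigma}{\treeroot}1\bigr)\,\dd\sigma$, and since each inner factor is $\int_0^\sigma 1\,\dd\tau = \sigma$ this equals $\int_0^\zeta \sigma^{\kappa_1}\,\dd\sigma = \MvI{\zeta}{\treeroot}\monomz^\kappa$. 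For the quadrature operators the identical unfolding produces $\MvQ{\bush{\kappa_1}}1 = \sum_{j=1}^s b_j \prod_{l=1}^{\kappa_1}\bigl(\MvQi{j}{\treeroot}1\bigr)$ and $\MvQi{i}{\bush{\kappa_1}}1 = \sum_{j=1}^{i-1} a_{ij}\prod_{l=1}^{\kappa_1}\bigl(\MvQi{j}{\treeroot}1\bigr)$. Here the assumption $c_1 = 0$ together with \eqref{eq:simpa} gives $\MvQi{j}{\treeroot}1 = \sum_{m=1}^{j-1} a_{jm} = c_j$, so each inner factor equals $c_j$ and the right-hand sides collapse to $\sum_j b_j c_j^{\kappa_1} = \MvQ{\treeroot}\monomz^\kappa$ and $\sum_{j<i} a_{ij} c_j^{\kappa_1} = \MvQi{i}{\treeroot}\monomz^\kappa$.

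\emph{Induction step} $\tree = [\tree_1, \ldots, \tree_k]$. Write $\kappa = (\kappa_{\mathrm r}, \kappa^{(1)}, \ldots, \kappa^{(k)})$, where $\kappa_{\mathrm r}$ is the exponent of the root variable and $\kappa^{(j)} \in \N_0^{\varrho(\tree_j)}$ collects the exponents belonging to $\tree_j$, so that $\monomz^\kappa(\sigma, \cdot_1, \ldots, \cdot_k) = \sigma^{\kappa_{\mathrm r}}\prod_{j=1}^k \monomz^{\kappa^{(j)}}(\cdot_j)$. Because the composite operator $\MvI{\sigma}{\tree_1}\cdots\MvI{\sigma}{\tree_k}$ integrates over pairwise disjoint groups of variables, it distributes over this product, leaving $\sigma^{\kappa_{\mathrm r}}$ untouched and yielding $\prod_{j=1}^k \MvI{\sigma}{\tree_j}\monomz^{\kappa^{(j)}}$. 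Applying the induction hypothesis to each subtree turns this into $\prod_{j=1}^k \MvI{\sigma}{\NodeToBush{\kappa^{(j)}}{\tree_j}}1$, and rewriting $\sigma^{\kappa_{\mathrm r}} = \prod_{l=1}^{\kappa_{\mathrm r}}\bigl(\MvI{\sigma}{\treeroot}1\bigr)$ exactly as in the base case produces the unfolding of $\MvI{\zeta}{\NodeToBush{\kappa}{\tree}}1$ for the tree $\NodeToBush{\kappa}{\tree} = [\treeroot, \ldots, \treeroot, \NodeToBush{\kappa^{(1)}}{\tree_1}, \ldots, \NodeToBush{\kappa^{(k)}}{\tree_k}]$ with $\kappa_{\mathrm r}$ bare roots adjoined at the top. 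The two quadrature identities are obtained verbatim, replacing $\int_0^\zeta(\cdot)\,\dd\sigma$ by $\sum_j b_j(\cdot)|_{\sigma=c_j}$ respectively $\sum_{j<i} a_{ij}(\cdot)|_{\sigma=c_j}$ and again invoking \eqref{eq:simpa} to turn the bare-root factors into powers of $c_j$.

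The only step requiring more than routine bookkeeping is the quadrature base case: the identification of ``adjoining a leaf at node $j$'' with ``multiplication by $c_j$'' rests entirely on the simplifying assumption \eqref{eq:simpa}, and this is precisely what ties the tree combinatorics to the Runge--Kutta coefficients. The remaining content is the observation that both the integration and the quadrature operators distribute over products of functions depending on disjoint variables, which the recursive definitions are designed to make automatic; I would verify this distributivity once and then let the induction run.
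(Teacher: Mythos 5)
Your proposal is correct and follows essentially the same route as the paper's proof: induction on the tree structure, with the base case resting on the identities $\sigma = \MvI{\sigma}{\treeroot}1$ and $c_j = \MvQi{j}{\treeroot}1$ (the latter from the simplifying assumption \eqref{eq:simpa}), and the induction step using the factorization of the multivariate operators over disjoint variable groups together with the recursive representation $\NodeToBush{\kappa}{\tree} = [\treeroot,\ldots,\treeroot,\NodeToBush{\kappa_1}{\tree_1},\ldots,\NodeToBush{\kappa_m}{\tree_m}]$. The only cosmetic difference is that you make the distributivity over products in disjoint variables an explicit lemma to verify, whereas the paper treats it as immediate from the recursive definitions.
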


\begin{proof}
  %
%
We prove the lemma by induction on $\varrho(\tree)$. The tree $\tree = \treeroot$ is the unique tree with $\varrho(\tree)=1$. For $k\in\N_0$, we have $\NodeToBush{k}{\treeroot} = \bush{k}$, where $\bush{k} = [\treeroot,\ldots,\treeroot]$  denotes the bush with $k$ leafs. Using $\sigma = \MvI{\sigma}{\treeroot} 1$ and the recursive definition of $\MvI{\zeta}{\tree}$ we obtain
  \begin{align*}
    \MvI{\zeta}{\treeroot} \monomz^k = \int_0^\zeta \sigma^k \dd \sigma = \int_0^\zeta \big ( \MvI{\sigma}{\treeroot}1 \big)^k \dd \sigma = \MvI{\zeta}{\bush{k}} 1 = \MvI{\zeta}{\NodeToBush{k}{\treeroot}} 1.
  \end{align*}
  Analogously, for $i=1,\ldots,\stages$, the simplifying assumptions yield $c_i = \MvQi{i}{\treeroot}1$ and this gives
  \begin{align*}
    \MvQi{i}{\treeroot} \monomz^k =  \sum_{j=1}^{i-1} a_{ij} c_j^k
  = \sum_{j=1}^{i-1} a_{ij} \big ( \MvQi{j}{\treeroot}1\big)^k
  = \MvQi{i}{\bush{k}} 1
  = \MvQi{i}{\NodeToBush{k}{\treeroot}} 1
  \end{align*}
  and $\MvQ{\treeroot} \monomz^k = \MvQ{\NodeToBush{k}{\treeroot}} 1$.

For the induction step,  consider the tree $\tree = [\tree_1,\ldots,\tree_m]$.
We regroup $\sigma$ and $\kappa$  according to the tree structure of $\tau$ as
$\sigma = (\mu,\sigma_{1}, \ldots,\sigma_{m}) \in \R^{\varrho(\tree)}$, $\kappa =(k,\kappa_1,\ldots,\kappa_m)\in \N_0^{\varrho(\tree)}$ with $\mu \in \R$, $k \in \N_0$, $\sigma_j \in \R^{\varrho(\tree_j)}$, $\kappa_j \in \N_0^{\varrho(\tree_j)}$ for $j=1,\ldots,m$. Then it holds $\sigma^\kappa = \mu^{k}\cdot\sigma_1^{\kappa_1}\cdot\ldots\cdot\sigma_m^{\kappa_m}$, $\monomz^\kappa = \monomz_0^k \monomz_1^{\kappa_1}\ldots \monomz_m^{\kappa_m}$, and $\monomz^0 = 1$.
The recursive definition of $\MvQi{i}{\tree}$ and the induction hypothesis imply
  \begin{align*}
    \MvQi{i}{\tree} \monomz^\kappa
    &= \sum_{j=1}^{i-1} a_{ij}  \, \MvQi{j}{\tree_1} \cdots \MvQi{j}{\tree_m} c_j^k \monomz_1^{\kappa_1} \cdots \monomz_m^{\kappa_m} \\
    & = \sum_{j=1}^{i-1} a_{ij}  c_j^k \big(\MvQi{j}{\tree_1}\monomz_1^{\kappa_1}\big) \cdots \big(\MvQi{j}{\tree_m}  \monomz_m^{\kappa_m}\big) \\
    & = \sum_{j=1}^{i-1} a_{ij}  \big(\MvQi{j}{\treeroot}1\big)^k \big(\MvQi{j}{\NodeToBush{\kappa_1}{\tree_1}} 1\big) \cdots \big(\MvQi{j}{\NodeToBush{\kappa_m}{\tree_m}} 1\big) \\
    & = \MvQi{i}{\NodeToBush{\kappa}{\tree}} 1
  \end{align*}
  for $i=1,\ldots,s$, where we used that the sprouted tree can be cast recursively as
  \begin{align*}
    \NodeToBush{\kappa}{\tree} =  [\treeroot,\ldots,\treeroot, \NodeToBush{\kappa_1}{\tree_1},\ldots,\NodeToBush{\kappa_m}{\tree_m}].
  \end{align*}
  The assertion for $\MvI{\zeta}{\tree}$ and $\MvQ{\tree}$ can be shown analogously.
\end{proof}

The following theorem provides a sufficient condition for Lawson
methods being of (stiff) order~$p$. Here, $C^{k,1}(\R^d,X)$ denotes
the space of $k$ time continuously differentiable functions which have
a Lip\-schitz continuous $k$th derivative.

\begin{theorem} \label{thm:lawson-order}
  Let the integrand $\Fe(\tree)$ of $\Ge(\tree)$ satisfy
  \begin{equation}
    \label{eq:lawson-order}
    \Fe(\tree)\bigl(\cdot,u(t)\bigr) \in C^{p-\varrho(\tree),1}
  \bigl([0,1]^{\varrho(\tree)},X\bigr)
  \qquad \text{for all }\tree \in \calT_p,
  \end{equation}
  where $u(t) \in X$ is the solution of \eqref{eq:semi-Lawson}, $0\leq t \leq
  T$.  If the underlying Runge--Kutta method is of (conventional) order
  $p$, then the Lawson method \eqref{eq:lawson} is of (stiff) order
  $p$.
\end{theorem}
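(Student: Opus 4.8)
The plan is to reduce the statement, via Theorem~\ref{thm:order}, to a single multivariate quadrature estimate, and then to exploit that an order-$p$ Runge--Kutta rule integrates low-degree monomials exactly. By Theorem~\ref{thm:order} it suffices to prove, for every $\tree\in\calT_p$ and with the base point $u_0=u(t)$, $0\le t\le T$, a point on the exact solution, that
$$
\Gn(\tree)(u_0)-\Ge(\tree)(u_0)=\calO\bigl(\sw^{\,p+1-\varrho(\tree)}\bigr).
$$
Writing $r=\varrho(\tree)$ and $k=p-r$, the starting point is that both quantities are built from one and the same integrand: by Definitions~\ref{def:Ge} and \ref{def:Gn},
$$
\Ge(\tree)(u_0)=\MvI{}{\tree}\,\Fe(\tree)(\cdot,u_0),\qquad
\Gn(\tree)(u_0)=\MvQ{\tree}\,\Fe(\tree)(\cdot,u_0),
$$
so the difference is exactly the error of the multivariate quadrature $\MvQ{\tree}$ against the multivariate integral $\MvI{}{\tree}$, applied to $f:=\Fe(\tree)(\cdot,u_0)\in C^{k,1}([0,1]^{r},X)$. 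A crucial bookkeeping point is that, by Definitions~\ref{def:Ge} and \ref{def:Gn}, the operators $\MvI{}{\tree}$ and $\MvQ{\tree}$ carry no factors of $A$; they are $\sw$-independent bounded maps $C([0,1]^{r},X)\to X$, so the entire $\sw$-dependence of the error sits inside $f$.

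Next I would Taylor-expand $f$ in its integration variables about $0\in[0,1]^{r}$ up to degree $k$,
$$
f(\sigma)=\sum_{\abs{\kappa}\le k}\frac{\partial^{\kappa}f(0)}{\kappa!}\,\monomz^{\kappa}(\sigma)+R(\sigma),
$$
where, since the $k$th-order derivatives of $f$ are Lipschitz, $\norm{R(\sigma)}\le C\,L\,\abs{\sigma}^{k+1}$ with $L$ the Lipschitz seminorm of those derivatives. Applying the two linear operators and pulling out the constant coefficients $\partial^{\kappa}f(0)\in X$ gives
$$
\Gn(\tree)(u_0)-\Ge(\tree)(u_0)
=\sum_{\abs{\kappa}\le k}\frac{\partial^{\kappa}f(0)}{\kappa!}
 \bigl(\MvQ{\tree}\monomz^{\kappa}-\MvI{}{\tree}\monomz^{\kappa}\bigr)
 +\bigl(\MvQ{\tree}R-\MvI{}{\tree}R\bigr).
$$
The polynomial part cancels identically: for $\abs{\kappa}\le k$ the sprouted tree $\NodeToBush{\kappa}{\tree}$ has order $r+\abs{\kappa}\le p$, hence lies in $\calT_p$, and Lemma~\ref{lem:tree+mon=bigtree} together with Corollary~\ref{cor:rkm-qf} (which uses the conventional order-$p$ hypothesis) yields
$$
\MvI{}{\tree}\monomz^{\kappa}=\MvI{}{\NodeToBush{\kappa}{\tree}}1
=\MvQ{\NodeToBush{\kappa}{\tree}}1=\MvQ{\tree}\monomz^{\kappa}.
$$
Thus only the remainder term survives, and by uniform boundedness of the operators, $\norm{\Gn(\tree)(u_0)-\Ge(\tree)(u_0)}\le C\sup_{\sigma}\norm{R(\sigma)}\le C'L$.

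It remains to show $L=\calO(\sw^{k+1})$, and this is where the regularity hypothesis does the real work and is the main obstacle. The point is that $\sigma$ enters $\Fe(\tree)(\cdot,u_0)$ only through the exponentials $\ee^{-(\zeta-\sigma)\sw A}$ and the compositions $g^{(\ell)}(\ee^{-\sigma\sw A}u_0)$, and each differentiation extracts an explicit factor $\sw$, since $\partial_\sigma\,\ee^{-(\zeta-\sigma)\sw A}=\sw A\,\ee^{-(\zeta-\sigma)\sw A}$ and $\partial_\sigma\,g(\ee^{-\sigma\sw A}u_0)=-\sw\,g'(\ee^{-\sigma\sw A}u_0)A\ee^{-\sigma\sw A}u_0$. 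Iterating, $\partial^{\kappa}f=\sw^{\abs{\kappa}}$ times expressions consisting of $\abs{\kappa}$ applications of $A$ to the solution data, and the hypothesis $\Fe(\tree)(\cdot,u(t))\in C^{k,1}$, read as uniform boundedness (in $\sw$ and $t$) of precisely these $A$-applications along the exact solution, forces $\partial^\kappa f=\calO(\sw^{\abs{\kappa}})$ for $\abs{\kappa}\le k$ and $L=\calO(\sw^{k+1})$. The delicate part is exactly this $\sw$-accounting: establishing that the $C^{k,1}$-regularity delivers a remainder of size $\sw^{k+1}$ rather than merely $\calO(1)$, uniformly in $\sw$ and in the base point $u_0=u(t)$. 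Granting it, $\Gn(\tree)(u_0)-\Ge(\tree)(u_0)=\calO(\sw^{k+1})=\calO(\sw^{\,p+1-\varrho(\tree)})$ for all $\tree\in\calT_p$, and Theorem~\ref{thm:order} finishes the proof.
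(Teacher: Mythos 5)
Your proposal is correct and follows essentially the same route as the paper's proof: Taylor expansion of the integrand $\Fe(\tree)(\cdot,u_0)$ to degree $p-\varrho(\tree)$, exact cancellation of the polynomial part via Lemma~\ref{lem:tree+mon=bigtree} and Corollary~\ref{cor:rkm-qf}, and the bound $\norm{\Dif{\kappa}\Fe(\tree)}=\calO(\sw^{\abs{\kappa}})$ to control the remainder. You are in fact somewhat more explicit than the paper about the key step, namely that the $C^{k,1}$ hypothesis combined with the factor $\sw$ extracted per differentiation yields a Lipschitz seminorm of size $\calO(\sw^{k+1})$ for the $k$th derivatives, which is exactly the accounting the paper invokes in one line.
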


\begin{proof}
  Let $\tree \in \calT$ such that $\varrho(\tree) \leq p$.
  We approximate $\Fe(\tree)(\cdot, u_0) \colon [0,1]^{\varrho(\tree)} \to X$ by a multivariate  Taylor
  polynomial of degree $p-\varrho(\tree)$.
  By assumption on $\Fe(\tree)$, the coefficients and the remainder of this Taylor polynomial are bounded.
  Using the linearity of the multivariate integrals and quadrature rules, we have by Lemma~\ref{lem:tree+mon=bigtree}
  \begin{align*}
    \Ge(\tree)(u_0)  -  \Gn(\tree)(u_0)
    & = \MvI{}{\tree} \Fe(\tree)(\cdot,u_0) -   \MvQ{\tree} \Fe(\tree)(\cdot,u_0)
    \\
    & = \sum_{\abs{\kappa} \leq p - \varrho(\tree)} \frac{1}{\kappa !} \Dif{\kappa}(\Fe(\tree))(0,u_0) \left ( \MvI{}{\tree} - \MvQ{\tree} \right) \monomz^\kappa + \calO(\sw^{p- \varrho(\tree)+1})
    \\
    & = \sum_{\abs{\kappa} \leq p- \varrho(\tree)} \frac{1}{\kappa !} \Dif{\kappa}(\Fe(\tree))(0,u_0) \left ( \MvI{}{\NodeToBush{\kappa}{\tree}} 1  - \MvQ{\NodeToBush{\kappa}{\tree}} 1 \right)  + \calO(\sw^{p- \varrho(\tree)+1}).
  \end{align*}
  Here we used $\norm{\Dif{\kappa} \Fe(\tree) } = \calO(\sw^{\abs{\kappa}})$ to bound the remainder term.
  Since the Runge--Kutta method is of order $p$, the claim now follows from  $\varrho(\NodeToBush{\kappa}{\tree}) = \varrho(\tree) + \abs{\kappa}\leq p$ and Corollary~\ref{cor:rkm-qf} which implies $\MvI{}{\NodeToBush{\kappa}{\tree}} 1  = \MvQ{\NodeToBush{\kappa}{\tree}} 1$.
\end{proof}

%

This result now allows us to prove an error bound for Lawson methods
which is uniform for all problems \eqref{eq:semi-Lawson} with $A$ satisfying
Assumption~\ref{ass:noframe}.

\begin{theorem} \label{thm:lawson-error} Let $u$ be the solution of
  \eqref{eq:semi-Lawson} and let the assumptions of
  Theorem~$\ref{thm:lawson-order}$ be satisfied.  If the underlying
  Runge--Kutta method is of (conventional) order $p$, then there
  exists $\sw_0>0$ such that for all $0 <\sw \leq \sw_0$ sufficiently
  small,
  \begin{equation*}
    \norm{u(t_n) - u_n} \leq C \sw^p, \qquad t_n = n\sw \leq T,
  \end{equation*}
  where $C$ and $\sw_0$ are independent of $n$, $\sw$, and $A$.
\end{theorem}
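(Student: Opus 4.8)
The plan is to combine the one-step (local) error estimate furnished by Theorem~\ref{thm:lawson-order} with a stability estimate for the numerical flow, and then to sum up the propagated local errors in the standard way (a discrete variation-of-constants / Lady Windermere's fan argument). I write $\Phi_\sw$ for the one-step map defined by \eqref{eq:lawson}, so that $u_{n+1}=\Phi_\sw(u_n)$, and set $e_n = u_n - u(t_n)$, with $e_0=0$ because the exact initial value is used. I introduce the local error $d_{n+1} = u(t_{n+1}) - \Phi_\sw\bigl(u(t_n)\bigr)$ obtained by applying one step of the method to the exact value. Since \eqref{eq:semi-Lawson} is autonomous and the regularity \eqref{eq:lawson-order} is assumed uniformly for $u(t)$, $0\le t\le T$, applying Theorem~\ref{thm:lawson-order} on each subinterval with starting value $u(t_n)$ gives $\norm{d_{n+1}} \le C\sw^{p+1}$, with $C$ depending only on $\CE$ and the derivatives of $g$, not on $A$.

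Next I would derive the error recursion. Subtracting $\Phi_\sw(u(t_n))$ and inserting the explicit form \eqref{eq:lawson}, the homogeneous parts combine into the single propagator $\ee^{-\sw A}$, while the differences of the stage values enter only through $g$; the Lipschitz continuity of $g$ (Assumption~\ref{ass:noframeg}) together with $\norm{\ee^{-tA}}\le\CE$ then yields
\begin{equation*}
  e_{n+1} = \ee^{-\sw A} e_n + \sw\, r_n - d_{n+1}, \qquad \norm{r_n} \le L\norm{e_n},
\end{equation*}
where $L$ depends on $\CE$ and the Lipschitz constant of $g$ but not on $A$ (one first bounds the stage differences $U_{ni}(u_n)-U_{ni}(u(t_n))$ by $(\CE+\calO(\sw))\norm{e_n}$ by induction over the stages). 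Solving this linear recursion by the discrete variation-of-constants formula and using $e_0=0$ gives
\begin{equation*}
  \norm{e_n} \le \CE \sum_{k=0}^{n-1} \bigl( \norm{d_{k+1}} + \sw L \norm{e_k}\bigr).
\end{equation*}
The decisive point is that the $n$-fold composition of $\ee^{-\sw A}$ equals $\ee^{-t_n A}$, whose norm is still bounded by $\CE$ thanks to the group property in Assumption~\ref{ass:noframe}; hence the stiffness is \emph{not} amplified over the $n\le T/\sw$ steps. Inserting $\norm{d_{k+1}}\le C\sw^{p+1}$ and $n\sw\le T$, the first sum is $\calO(\sw^p)$, and the discrete Gronwall lemma then gives $\norm{e_n} \le \CE C T\,\ee^{\CE L T}\sw^p =: C'\sw^p$, uniformly in $n$, $\sw$ and $A$.

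The step I expect to be the main obstacle is making the stability estimate rigorous \emph{uniformly in the stiffness}, and in particular controlling where $g$ is evaluated. The bound $\norm{r_n}\le L\norm{e_n}$ relies on Lipschitz continuity of $g$ only in a neighborhood of the solution (Assumption~\ref{ass:noframeg}), so I must guarantee that the numerical iterates $u_n$ and all internal stages $U_{ni}$ remain in that neighborhood for $\sw$ small. I would close this by a bootstrap (induction on $n$): assuming $\norm{e_k}\le C'\sw^p$ for $k\le n$ keeps $u_k$ within $C'\sw_0^p$ of $u(t_k)$, and the recursion \eqref{eq:lawson-Ui} shows each $U_{ni}$ differs from $\ee^{-c_i\sw A}u(t_n)$ by $\calO(\sw)$, so for $\sw_0$ small enough everything stays in the neighborhood and the Gronwall constant reproduces $\norm{e_{n+1}}\le C'\sw^p$. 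Choosing $\sw_0$ so small that this induction closes (and that the local-error estimate of Theorem~\ref{thm:lawson-order} applies) completes the argument; alternatively one may modify $g$ outside the neighborhood so that it becomes globally Lipschitz with bounded derivatives, which removes the bootstrap subtlety without affecting either the exact or the numerical solution for small $\sw$.
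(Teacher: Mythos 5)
Your argument is correct and follows essentially the same route as the paper: the local error bound of Theorem~\ref{thm:lawson-order} applied along the exact solution, a stability estimate uniform in $A$, and Lady Windermere's fan. The only (minor, technical) difference lies in how stability is secured: the paper passes to the equivalent norm $\norm{v}_\star=\sup_{t}\norm{\ee^{-tA}v}$, in which the one-step map becomes Lipschitz with constant $1+\calO(\sw)$, whereas you stay in the original norm, telescope the linear propagators into a single operator bounded by $\CE$, and close with a discrete Gronwall lemma together with the bootstrap that keeps the iterates in the neighborhood where $g$ is Lipschitz --- both devices serve the same purpose of preventing the constant $\CE>1$ from being raised to the power $n$.
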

\begin{proof}
   We  define a norm by
   \begin{equation*}
     \norm{v}_\star = \sup_{t \in \R} \norm{\ee^{-tA} v}.
   \end{equation*}
   This norm is equivalent to $\norm{\cdot}$ 
   and we have in the corresponding operator norm
   \begin{equation}
     \label{eq:A-contraction} \norm{\ee^{-tA}}_\star \leq 1, \qquad \text{for
       all } t \in \R.
   \end{equation}
   If $-A$ only generates a bounded semigroup, then taking the
   supremum only over $t \geq 0$ shows that $-A$ generates
   a contraction semigroup.

  By assumption, $g$ is locally Lipschitz continuous. Then
  \eqref{eq:A-contraction} and Theorem~\ref{thm:lawson-expand}
  show that the Lawson method is
  locally Lipschitz with respect to the initial value with a Lipschitz
  constant of size $1+\calO(\sw)$. This implies the required
  stability.

  The error bound follows in a standard way using Lady Windermere's
  fan.
\end{proof}

\section{Regularity conditions and applications} \label{sec:applications}

It remains to discuss the regularity conditions~\eqref{eq:lawson-order} and to give some applications. We first examine the conditions for orders one and two, respectively. The extension to higher orders is a tedious but straightforward exercise. It turns out that these regularity conditions can all be expressed in terms of commutators, very much like in the case of splitting methods.

In order to obtain simple sufficient conditions, we replace the space $C^{k,1}(\Omega,X)$ in condition~\eqref{eq:lawson-order} by the subspace of $k+1$ times partially differentiable functions with uniformly bounded partial derivatives on $\Omega$ in the following discussion. This is also justified by the fact that Lipschitz continuous functions are almost everywhere differentiable (Rademacher's theorem).

\subsection{Condition for order one} \label{subsec:order-1}

Since $p=\varrho(\tree)=1$, we only have to consider the tree $\tree=\,\treeroot\,$ in~\eqref{eq:lawson-order}.
Differentiating
\begin{equation}   \label{eq:psiroot}
  \Fe(\treeroot)(\sigma,w) = \ee^{-(1-\sigma)\sw A} g_\sigma(w)
              = \ee^{-(1-\sigma)\sw A} g\big(\ee^{-\sigma \sw A}w\big)
\end{equation}
with respect to $\sigma$ yields
\begin{equation}\label{eq:diff1}
\begin{aligned}
  \partial_\sigma \Fe(\treeroot)(\sigma,w) &=
    \sw  \ee^{-(1-\sigma)\sw A} \Bigl(A g_\sigma(w)-
    g_\sigma'(w) A \ee^{-\sigma \sw A} w  \Bigr)\\
    &=\sw  \ee^{-(1-\sigma)\sw A} [F_A,g]\bigl(\ee^{-\sigma \sw A} w\bigr),
\end{aligned}
\end{equation}
where $[F_A,g]$ denotes the Lie commutator of $g$ and $F_A(w) = Aw$, defined as
\begin{equation}
  \label{eq:def-liecomm}
   [F_A,g](w) = F_A'(w)g(w) - g'(w)F_A(w) = Ag(w) - g'(w)Aw .
\end{equation}
From this calculation, we conclude the following result. If the bound
\begin{equation}\label{eq:c1-1}
\sup_{\ 0\le\sigma\le 1\ }\sup_{0\le t\le T}\norm{\ee^{-(1-\sigma)\sw A}\comm{F_A,g}\bigl(\ee^{-\sigma \sw A} u(t)\bigr)} \le C
\end{equation}
holds with a constant $C$ that is allowed to depend on $\CE$, then a Lawson method of non-stiff order \emph{one} has also \emph{stiff} order \emph{one}.

\subsection{Conditions for order two} \label{subsec:order-2}

Stiff order two is achieved if we require the following two regularity conditions
\begin{equation*}
  \Fe(\treeroot)(\cdot,u(t)) \in C^2([0,1],X)
  \qquad \text{and} \qquad
  \Fe(\treeba{1}\,) (\cdot,u(t))\in C^1([0,1]^2,X).
\end{equation*}
We commence with the first condition and exploit the fact that
$\partial_\sigma \Fe(\treeroot)(\sigma,w)$ is of exactly the same form
as \eqref{eq:psiroot} with $g$ replaced by the vector field
$\comm{F_A,g}$.  Hence from \eqref{eq:diff1} we have
\begin{align*}
  \partial^2_\sigma \Fe(\treeroot)(\sigma,w)
  & =     \sw ^2 \ee^{-(1-\sigma)\sw A} \bigl[F_A,[F_A,g]\bigr]\bigl(\ee^{-\sigma \sw A}w \bigr).
\end{align*}
Therefore, the bound
\begin{equation}\label{eq:c1-2}
   \sup_{\ 0\le\sigma\le 1\ }\sup_{0\le t\le T}\norm{\ee^{-(1-\sigma)\sw A}\bigl[F_A,\comm{F_A,g}\bigr]\bigl(\ee^{-\sigma \sw A} u(t)\bigr)}\le C
\end{equation}
should hold with a constant $C$ that is independent of $\|A\|$.

Next, we move to the second condition. Differentiating
\begin{equation*}
    \Fe(\treeba{1}\,)(\sigma_1,\sigma_2,w)
    =  \ee^{-(1-\sigma_1) \sw A}  g_{\sigma_1}'(w)
       \ee^{-(\sigma_1-\sigma_2) \sw A}  g_{\sigma_2}(w)
\end{equation*}
with respect to $\sigma_1$ and $\sigma_2$ yields
\begin{equation*}
\begin{aligned}
  \partial_{\sigma_1}  \Fe(\treeba{1}\,)(\sigma_1,\sigma_2,w)
  &=\sw  \ee^{-(1-\sigma_1) \sw A}  \Bigl( A g_{\sigma_1}'(w) \ee^{-(\sigma_1-\sigma_2) \sw A}g_{\sigma_2}(w)\\
    &\hspace{5mm}-  g_{\sigma_1}''(w)\bigl(  \ee^{-\sigma_1 \sw A} Aw
    ,\ee^{-(\sigma_1-\sigma_2) \sw A} g_{\sigma_2}(w)\bigr) -
    g_{\sigma_1}'(w) A \ee^{-(\sigma_1-\sigma_2) \sw A}
     g_{\sigma_2}(w) \Bigr)\\
     &= \sw  \ee^{-(1-\sigma_1) \sw A}
     \comm{F_A,g}'\big(\ee^{-\sigma_1\sw A}w\big)  \ee^{-(\sigma_1-\sigma_2) \sw A} g_{\sigma_2}(w) ,
\end{aligned}
\end{equation*}
since by definition \eqref{eq:def-liecomm} the derivative of the
commutator satisfies
\begin{equation} \label{eq:comm-der1}
 \comm{F_A,g}' (w) v  = \frac{d}{dw}   \big( \comm{F_A,g} \big)(w) v
    = A g'(w) v- g''(w) (Aw,v) - g'(w) Av .
\end{equation}
Moreover, we have
\begin{equation*}
  \partial_{\sigma_2}  \Fe(\treeba{1}\,)(\sigma_1,\sigma_2,w)
   = \sw  \ee^{-(1-\sigma_1) \sw A}
   g_{\sigma_1}'(w)  \ee^{-(\sigma_1-\sigma_2) \sw A} \comm{F_A,g} \bigl(\ee^{-\sigma_2 \sw A} w \bigr),
\end{equation*}
respectively. From these two relations, we infer that the bounds
\begin{subequations}\label{eq:c2}
\begin{align}
\sup_{\ 0\le\sigma_1,\sigma_2\le 1\ }\sup_{0\le t\le T}
 \Big \| \ee^{-(1-\sigma_1) \sw A}
     \comm{F_A,g}'\big(\ee^{-\sigma_1\sw A}u(t)\big)
  \ee^{-(\sigma_1-\sigma_2) \sw A} g_{\sigma_2}(u(t))  \Big \| & \le C,  \label{eq:c2-1a} \\
\sup_{\ 0\le\sigma_1,\sigma_2\le 1\ }\sup_{0\le t\le T}\norm{\ee^{-(1-\sigma_1) \sw A} g_{\sigma_1}'(u(t)) \ee^{-(\sigma_1-\sigma_2) \sw A} \comm{F_A,g}\bigl(\ee^{-\sigma_2 \sw A} u(t) \bigr)}&\le C \label{eq:c2-1b}
\end{align}
\end{subequations}
should hold with a constant $C$ that is independent of $\|A\|$.

From the above calculations, we conclude the following result. If the conditions \eqref{eq:c1-1}, \eqref{eq:c1-2}, and \eqref{eq:c2} hold with a constant $C$ that does not depend on $\|A\|$, then a Lawson method of
non-stiff order \emph{two} has also \emph{stiff} order \emph{two}.

\subsection{Conditions for higher order} \label{subsec:higherorder}

The following lemma provides the formulas to derive
the order conditions for order larger than two in a systematic way.

\begin{lemmap} \label{lem:pnsigmabusch}
  Let $m\geq 1$. 
  \begin{enumerate}
  \item For $\tree = \,\treeroot\,$ we have
    \begin{equation*}
  \partial^m_\sigma \Fe_\zeta(\treeroot)(\sigma,w)
   =    \sw^m \ee^{-(\zeta-\sigma)\sw A}
   [F_A,g]_m\bigl(\ee^{-\sigma \sw A}w \bigr),
    \end{equation*}
    where $ \bigl[F_A,g\bigr]_{m+1} = \bigl[F_A, [F_A,g]_{m}\bigr]$ with  $[F_A,g]_1 = [F_A,g]$ denotes the $(m+1)$-fold
    commutator.
  \item For $\tree= [\tree_1,\ldots,\tree_k]$ we have
    \begin{align*}
      \partial_\sigma^m
       \Fe_\zeta(\tree)(\sigma,\sigma_1,\ldots,\sigma_k,w)
          =  \sw \ee^{-(\zeta-\sigma) \sw A} &\bigl[F_A,g\bigr]_m^{(k)}
            \bigl(\ee^{-\sigma \sw A} w \bigr)
            \bigl(\Fe_\sigma(\tree_1)(\sigma_1,w),\ldots,\Fe_\sigma(\tree_k)(\sigma_k,w)\bigr)
    \end{align*}
    for $\sigma \in \R$ and $\sigma_j \in \R^{\varrho (\tree_j)}$, $j=1,\ldots,k$.
  \end{enumerate}
\end{lemmap}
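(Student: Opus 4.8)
The plan is to prove both parts by induction on $m$, using the recursive structure of $\Fe_\zeta(\tree)$ from Definition~\ref{def:Ge} together with the two basic differentiation identities already established in Section~\ref{subsec:order-1}: namely that $\partial_\sigma\bigl(\ee^{-(\zeta-\sigma)\sw A}\bigr) = \sw A\,\ee^{-(\zeta-\sigma)\sw A}$ and $\partial_\sigma\bigl(g_\sigma(w)\bigr) = \partial_\sigma g\bigl(\ee^{-\sigma\sw A}w\bigr) = -\sw\,g'_\sigma(w)A\ee^{-\sigma\sw A}w$.

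For part (a), the base case $m=1$ is exactly the computation \eqref{eq:diff1}, which shows that differentiating $\Fe_\zeta(\treeroot)$ once in $\sigma$ reproduces a function of the \emph{same} structural form but with $g$ replaced by the commutator $[F_A,g]$. The crucial observation I would emphasize is that $[F_A,g]$ is again a vector field on $X$, so the differentiation rule applies verbatim to it. Thus for the induction step I would write $\partial_\sigma^{m}\Fe_\zeta(\treeroot)(\sigma,w) = \sw^m\ee^{-(\zeta-\sigma)\sw A}[F_A,g]_m\bigl(\ee^{-\sigma\sw A}w\bigr)$ and apply the $m=1$ calculation with $g$ replaced by the vector field $[F_A,g]_m$; each additional derivative produces one more factor of $\sw$ and wraps one more commutator with $F_A$, yielding $\sw^{m+1}\ee^{-(\zeta-\sigma)\sw A}\bigl[F_A,[F_A,g]_m\bigr]\bigl(\ee^{-\sigma\sw A}w\bigr)$, which by definition equals $\sw^{m+1}\ee^{-(\zeta-\sigma)\sw A}[F_A,g]_{m+1}\bigl(\ee^{-\sigma\sw A}w\bigr)$. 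This closes the induction.

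For part (b), the essential point is that in $\Fe_\zeta(\tree)(\sigma,\sigma_1,\ldots,\sigma_k,w)$ the variable $\sigma$ enters in \emph{three} distinct places: in the outer exponential $\ee^{-(\zeta-\sigma)\sw A}$, in the derivative factor $g^{(k)}_\sigma(w)$, and in the inner integrand arguments $\Fe_\sigma(\tree_j)$ through their subscript $\sigma$. The subtlety is that differentiating with respect to $\sigma$ must account for all three. The first-order derivative combines the first two contributions into $\comm{F_A,g}^{(k)}$ exactly as the scalar computation \eqref{eq:comm-der1} does for the $k=1$ case—this is the structural analogue of \eqref{eq:diff1} at the level of the $k$th derivative of $g$—while the chain-rule contributions from the inner factors $\Fe_\sigma(\tree_j)$ are what the stated formula carries along unchanged in the arguments. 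I would verify that the higher derivatives $\partial_\sigma^m$ simply iterate the commutator bracket on $g^{(k)}$, giving $\bigl[F_A,g\bigr]_m^{(k)}$ with a single factor of $\sw$ surviving in the prefactor as stated.

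The main obstacle, and the step deserving the most care, is the bookkeeping in part (b) of the derivatives falling on the inner arguments $\Fe_\sigma(\tree_j)$. A priori one expects Leibniz-type cross terms mixing $\partial_\sigma$ of the prefactor with $\partial_\sigma$ of the arguments, and it is not immediately obvious why these do not spoil the clean form of the stated identity. The resolution is that the claimed formula does \emph{not} differentiate the inner arguments at all—the variables $\sigma_j$ governing the subtrees are held fixed, and the subscript $\sigma$ appearing in $\Fe_\sigma(\tree_j)$ is to be read as the evaluation point that is transported along, exactly mirroring how $\sigma$ is frozen as the upper integration limit variable in the recursive definition of $\MvI{\zeta}{\tree}$. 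Once this reading is made explicit, the computation reduces to the single-site differentiation of part (a) applied to the outer factor $\ee^{-(\zeta-\sigma)\sw A}g^{(k)}_\sigma(w)$, and the formula follows directly; I would therefore state this convention carefully before carrying out the calculation.
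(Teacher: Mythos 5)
Part (a) of your proposal is essentially the paper's argument: induction on $m$, base case \eqref{eq:diff1}, induction step via the observation that $\partial_\sigma \Fe_\zeta(\treeroot)(\sigma,w)$ has the same structural form as $\Fe_\zeta(\treeroot)$ with $g$ replaced by the vector field $\comm{F_A,g}$.

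Part (b), however, contains a genuine error at exactly the point you flag as the main obstacle. You resolve the bookkeeping by declaring that the inner arguments $\Fe_\sigma(\tree_j)(\sigma_j,w)$ are \emph{not} differentiated, their subscript $\sigma$ being ``frozen''. This is wrong: by the recursive definition, $\Fe_\sigma(\tree_j)(\sigma_j,w)$ carries the leading factor $\ee^{-(\sigma-\cdot)\sw A}$, so it depends on $\sigma$, and the paper differentiates it using the relation $\partial_\eta \Fe_\eta(\tree_j) = -\sw A\,\Fe_\eta(\tree_j)$. These chain-rule terms are not a nuisance to be argued away; they are indispensable. Differentiating only the outer factor $\ee^{-(\zeta-\sigma)\sw A} g_\sigma^{(k)}(w)$ produces $\sw\,\ee^{-(\zeta-\sigma)\sw A}\bigl(A g_\sigma^{(k)}(w)(\ldots) - g_\sigma^{(k+1)}(w)(A\ee^{-\sigma\sw A}w,\ldots)\bigr)$, whereas by \eqref{eq:comm-der} and \eqref{eq:comm-derm-help} the commutator derivative $\comm{F_A,g}^{(k)}(w)(v_1,\ldots,v_k)$ additionally contains $-g^{(k)}(w)(Av_1,\ldots,v_k) - \cdots - g^{(k)}(w)(v_1,\ldots,Av_k)$. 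Those missing terms are supplied precisely by the contributions $-\sw A\,\Fe_\sigma(\tree_j)(\sigma_j,w)$ coming from the inner arguments; without them the right-hand side is not $\comm{F_A,g}^{(k)}$ evaluated at $\ee^{-\sigma\sw A}w$, and the claimed identity fails. Your closing remark that ``a single factor of $\sw$ survives in the prefactor as stated'' is a further symptom of this flawed bookkeeping: each application of $\partial_\sigma$ contributes one factor of $\sw$, consistently with part (a), so the correct route is the paper's one — differentiate all three occurrences of $\sigma$, assemble $\comm{F_A,g}^{(k)}$ via \eqref{eq:comm-derm-help}, and then induct on $m$ with $\comm{F_A,g}^{(k)}$ in the role of $g^{(k)}$.
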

\begin{proof}
Both parts are proved by induction on $m$.

(a)  For $m=1$ the statement was proved in \eqref{eq:diff1}. The induction step
is proved by the same arguments as were used for $m=2$ above.

(b)   To prove the statement for $m=1$, we first note that for
    $\tree= [\tree_1,\ldots,\tree_k]$ the integrand of
    $\Ge_\zeta(\tree)$ is given recursively as
    \begin{equation*}
      \Fe_\zeta(\tree)(\sigma,\sigma_1,\ldots,\sigma_k,w) =
        \ee^{-(\zeta-\sigma) \sw A}
     g_\sigma^{(k)}\bigl(w \bigr)
            \bigl(\Fe_\sigma(\tree_1)(\sigma_1, w),\ldots,\Fe_\sigma(\tree_k)(\sigma_k,w)\bigr).
    \end{equation*}
    Since $\partial_\eta \Fe_\eta(\tree) = -\sw A\Fe_\eta(\tree)$ for any tree $\tree$, we obtain
    \begin{align*}
      \partial_\sigma
      \Fe_\zeta(\tree)(\sigma,\sigma_1,\ldots,\sigma_k,w)
         = ~ &\sw  \ee^{-(\zeta-\sigma) \sw A}
             \Bigl( A g_\sigma^{(k)}(w)
          \bigl(\Fe_\sigma(\tree_1)(\sigma_1,
          w),\ldots,\Fe_\sigma(\tree_k)(\sigma_k,w)\bigr)\\
             &
               -g_\sigma^{(k+1)}(w) \big(A \ee^{-\sigma \sw A} w,
             \Fe_\sigma(\tree_1)(\sigma_1, w),\ldots,\Fe_\sigma(\tree_k)(\sigma_k,w)\big)\\
         &   - g_\sigma^{(k)}(w)
            \big( A \Fe_\sigma(\tree_1)(\sigma_1, w),
           \Fe_\sigma(\tree_2)(\sigma_2,w)\ldots,\Fe_\sigma(\tree_k)(\sigma_k,w)\big)\\
         & - \ldots \\
         &   - g_\sigma^{(k)}(w)
            \big(  \Fe_\sigma(\tree_1)(\sigma_1, w),\ldots,
            \Fe_\sigma(\tree_{k-1})(\sigma_{k-1}, w),
           A\Fe_\sigma(\tree_k)(\sigma_k,w)\big) \Big).
    \end{align*}
    On the other hand, by definition \eqref{eq:def-liecomm}, we have
    \begin{equation}
      \label{eq:comm-der}
      \comm{F_A,g}^{(k)}(w)(v_1,\ldots,v_k) =
             A g^{(k)}(w)(v_1,\ldots,v_k) - \frac{d^k}{dw^k} \big( g'(w) Aw\big) (v_1,\ldots,v_k).
    \end{equation}
    Using induction on $k$ it is easy to see that
    \begin{equation}
      \label{eq:comm-derm-help}
      \begin{aligned}
      \frac{d^k}{dw^k} \big( g'(w) Aw\big) (v_1,\ldots,v_k)
         = ~&g^{(k+1)}(w)(Aw,v_1,\ldots,v_k) \\
      & +     g^{(k)}(w)(Av_1,v_2,\ldots,v_k)
      + \ldots
      +     g^{(k)}(w)(v_1,\ldots,Av_k).
      \end{aligned}
    \end{equation}
    This proves the claim for $m=1$. If it holds for some
    $m\geq 1$ then it does also for $m+1$, since the same calculation
    can be done with $\comm{F_A,g}^{(k)}$ in the role of $g^{(k)}$.
\end{proof}

The lemma thus shows that all derivatives arising in the order
conditions can be obtained recursively from the tree structure. Moreover, only commutators, iterated commutators and their
derivatives appear.

\subsection{Specialisation to linear problems} \label{subsec:linear-Lawson}

For the linear evolution equation
\begin{equation*}
  u' + A u = B u, \qquad u(0)=u_0
\end{equation*}
with bounded operator $B$ on $X$, the above conditions \eqref{eq:c1-1}, \eqref{eq:c1-2}, and \eqref{eq:c2} simplify a bit.
Having $g(u) = B u$, the Lie commutator coincides with the operator commutator of $A$ and $B$
\begin{align*}
  \comm{F_A,g}(w) = A B w - B A w = \comm{A,B} w .
\end{align*}
A first-order Lawson method is of \emph{stiff} order one if
\begin{subequations}
\begin{equation}  \label{eq:lawson-o1-linear}
\sup_{\ 0\le\sigma\le 1\ }\sup_{0\le t\le T}\norm{ \ee^{-(1-\sigma)\sw A }  \comm{A,B} \ee^{-\sigma \sw A} u(t)}   \leq C.
\end{equation}
For second order, the conditions read
\begin{align}
\sup_{\ 0\le\sigma\le 1\ }\sup_{0\le t\le T}\norm{\ee^{-(1-\sigma)\sw A} \comm{A,\comm{A,B}} \ee^{-\sigma \sw A} u(t)}&\le C,\\
\sup_{\ 0\le\sigma_2\le\sigma_1\le 1\ }\sup_{0\le t\le T}\norm{ \ee^{-(1-\sigma_1) \sw A}  \comm{A,B}
       \ee^{-(\sigma_1-\sigma_2) \sw A}  B \ee^{-\sigma_2 \sw A} u(t) }& \leq C,\\
\sup_{\ 0\le\sigma_2\le\sigma_1\le 1\ }\sup_{0\le t\le T}\norm{ \ee^{-(1-\sigma_1) \sw A} B
   \ee^{-(\sigma_1-\sigma_2) \sw A} \comm{A,B}   \ee^{-\sigma_2 \sw A} u(t) } & \leq C.
\end{align}
\end{subequations}
We recall that such conditions also arise in the analysis of splitting methods, cf.\ \cite{JahL00}.

Using Lemma~\ref{lem:pnsigmabusch}, the above analysis can easily be
generalized to higher order, since for linear problems, only long
trees have to be considered.  For all other trees, which have at least
one node with two branches, the integrand $\Fe$ vanishes.

\subsection{Nonlinear Schr\"{o}dinger equations} \label{subsec:schroedinger}

For the time discretization of nonlinear Schr\"{o}dinger equations
\begin{equation}\label{eq:nls}
u' = \ii \bigl( \Delta u + f(|u|^2)u\bigr),
\end{equation}
split-step methods are commonly viewed as the method of choice. In recent years, however, exponential integrators have been considered as a viable alternative for the solution of \eqref{eq:nls}. For instance, \cite{BesDLV17} studied exponential integrators in the context of Bose--Einstein condensates; \cite{CanG15P} and \cite{BalFMMTP16} reported favorable results for Lawson integrators of the form as discussed in this paper. Rigorous convergence results, however, are still missing for these methods.

As an application of our analysis, we will use the above regularity conditions \eqref{eq:c1-1}, \eqref{eq:c1-2}, and \eqref{eq:c2} to verify second-order convergence of Lawson methods. We refrain from any particular space discretization and argue in an abstract Hilbert space framework. Note, however, that our reasoning carries over to spatial discretizations (by spectral methods, e.g.) without any difficulty.

For this purpose, we consider \eqref{eq:nls} with periodic boundary conditions on the $d$ dimensional torus and smooth potential. Then it is well known (see, e.g., \cite[Thm.~4.1]{Kat95}) that the problem is well posed in $H^m$ for $m>d/2$. The regularity of an initial value $u_0\in H^m$ is thus preserved along the solution. Henceforth we choose $m>d/2$.

Second-order Strang splitting for~\eqref{eq:nls}
with $f(u) = \pm u$ was rigorously analysed in \cite{Lub08}. There it was shown that commutator relations similar to our conditions \eqref{eq:c1-1}, \eqref{eq:c1-2}, and \eqref{eq:c2} play a crucial role in the convergence proof for Strang splitting. The analysis given here shows that Lawson methods converge under the same regularity assumptions as splitting schemes. This will be worked out now in detail for first and second-order methods.

Let $A=- \ii\, \Delta$ and $g(u)= \ii \beta |u|^2 u$, $\beta \in \R$, i.e.~$f=\beta I$.
By
\begin{align*}
  g(u+w) = \ii\beta (u+w)^2( \overline{u+w} )
                = g(u) + \ii\beta( u^2 \overline{w}
                   +  2 u  \overline{u} w )  + \calO(\abs{w}^2),
\end{align*}
the Fr\'echet derivative of $g$ is given by
\begin{equation*}
  g'(u) w = \ii \beta (u^2 \overline{w} + 2 \abs{u}^2 w).
\end{equation*}
The first commutator $\comm{F_A,g}$ then takes the form
\begin{align}
\comm{F_A,g}(u) &= Ag(u) - g'(u)Au \nonumber \\
& = - \ii \Delta g(u) + g'(u) (i \Delta u) \nonumber \\
&= \beta \nabla\cdot\nabla(u^2 \overline u)
    +\ii \beta\big(u^2 \overline{(\ii\Delta u)} + 2 \abs{u}^2 \ii \Delta u\big) \nonumber \\
&= \beta \nabla\cdot\bigl(2u\,\overline u \,\nabla u +
  u^2\nabla\overline u\bigr)
+ \beta \big( u^2\Delta \overline u
- 2 \abs{u}^2 \Delta u \big) \nonumber \\
&= \beta\bigl( 2 \overline u \,\nabla u\cdot\nabla u + 2 u\,\nabla
  u\cdot\nabla\overline u
  + 2 \abs{u}^2 \Delta u + 2 u \nabla u \cdot \nabla \overline{u}
   + 2 u^2 \Delta \overline{u} - 2 \abs{u}^2 \Delta u\big) \nonumber \\
&= 2\beta\bigl( \overline u \,\nabla u\cdot\nabla u + 2 u\,\nabla u\cdot\nabla\overline u +u^2\Delta\overline u\bigr).
 \label{eq:comm-FA-g}
\end{align}

We next show that   the commutator can be bounded
in $H^m$ if the solution is in $H^{m+2}$ for $m\geq 0$.

\begin{lemmap}  \label{lem:commutator-bound}
   Let $\Omega \subset \R^d$, $d\leq 3$, be a bounded Lipschitz
   domain. Then there exists a constant
  $C$ which only depends on $\Omega$ and $d$ such that
  \begin{align}  \label{eq:commutator-bound}
    \norm{\comm{F_A,g}u}_m \leq C \norm{u}_{m+2}^3.
  \end{align}
\end{lemmap}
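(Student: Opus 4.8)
The plan is to estimate the expression in \eqref{eq:comm-FA-g}, namely
$$
\comm{F_A,g}u = 2\beta\bigl( \overline u \,\nabla u\cdot\nabla u + 2 u\,\nabla u\cdot\nabla\overline u +u^2\Delta\overline u\bigr),
$$
directly in the $H^m$ norm, term by term. Each of the three summands is a product of three factors, each of which is either $u$ (or $\overline u$) or a first/second derivative of $u$. The essential observation is that the total number of derivatives appearing in each term is exactly two, so every factor lies in at worst $H^{m}$ once the solution is assumed to be in $H^{m+2}$: a factor carrying no derivative lies in $H^{m+2}$, a factor carrying one derivative lies in $H^{m+1}$, and a factor carrying two derivatives lies in $H^{m}$. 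Complex conjugation does not change Sobolev norms, so it plays no role in the estimates.

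The main tool will be the fact that $H^m(\Omega)$ is a Banach algebra whenever $m > d/2$, together with the continuous Sobolev embeddings available for a bounded Lipschitz domain $\Omega \subset \R^d$ with $d \leq 3$. First I would treat the high-regularity regime $m > d/2$: here $H^m$ is an algebra, and since each factor belongs to $H^m$ (using $H^{m+2}, H^{m+1}, H^{m} \hookrightarrow H^m$), the algebra property immediately gives
$$
\norm{\overline u \,\nabla u\cdot\nabla u}_m \le C \norm{u}_m \norm{\nabla u}_m^2 \le C \norm{u}_{m+2}^3,
$$
and similarly for the other two terms, yielding \eqref{eq:commutator-bound}. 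The restriction $d \leq 3$ ensures that $m=2 > d/2$ is available, so the algebra estimate covers the relevant low-order cases needed for the second-order convergence analysis.

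The hard part will be the low-regularity regime, in particular $m=0$, where $H^0 = L^2$ is \emph{not} an algebra and the naive product estimate fails. Here I would instead exploit the fact that the worst factor (carrying two derivatives) can be placed in $L^2$ while the remaining factors are controlled in $L^\infty$ via the embedding $H^{m+2} \hookrightarrow L^\infty$, valid precisely because $m+2 > d/2$ for $d \leq 3$. Concretely, for a term such as $u^2 \Delta\overline u$ I would estimate
$$
\norm{u^2 \Delta \overline u}_0 \le \norm{u}_{L^\infty}^2 \norm{\Delta u}_0 \le C \norm{u}_{m+2}^2 \norm{u}_{m+2} = C\norm{u}_{m+2}^3,
$$
using $\norm{u}_{L^\infty} \le C\norm{u}_{m+2}$ and $\norm{\Delta u}_0 \le \norm{u}_{m+2}$; the gradient terms like $\overline u\,\nabla u\cdot\nabla u$ are handled the same way by bounding two factors in $L^\infty$ and the third in $L^2$. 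For intermediate $m$ one can interpolate, or more cleanly apply the standard Moser-type product estimate $\norm{vw}_m \le C(\norm{v}_{L^\infty}\norm{w}_m + \norm{w}_{L^\infty}\norm{v}_m)$ and distribute derivatives so that no factor exceeds $H^{m+2}$. Since the number of derivatives per term is fixed at two and the number of factors is fixed at three, every resulting norm is bounded by $\norm{u}_{m+2}^3$, and collecting the three terms with the constant $2|\beta|$ absorbed into $C$ completes the proof.
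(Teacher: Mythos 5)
Your overall strategy (estimate \eqref{eq:comm-FA-g} term by term, distinguish the regimes of $m$, use the algebra property of $H^m$ for $m\ge 2$) matches the paper, and the cases $m\ge 2$ and the Laplacian term $u^2\Delta\overline u$ at $m=0$ are handled correctly. However, there is a genuine gap in your treatment of the gradient terms at low regularity. For $\overline u\,\nabla u\cdot\nabla u$ with $m=0$ you propose to put ``two factors in $L^\infty$ and the third in $L^2$.'' Since only one of the three factors is underived, at least one copy of $\nabla u$ must be placed in $L^\infty$; but $\nabla u$ only lies in $H^{m+1}=H^1$, and $H^1(\Omega)\not\hookrightarrow L^\infty(\Omega)$ for $d=2,3$. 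So this Hölder splitting fails precisely in the dimensions the lemma is meant to cover. The symmetric Moser estimate you offer as an alternative has the same defect: at $m=0$ it again requires an $L^\infty$ bound on one of the derivative factors, and at $m=1$ it would require $\Delta u\in L^\infty$, i.e.\ $u\in H^{7/2+\epsilon}$ in $d=3$, whereas only $u\in H^{m+2}=H^3$ is available.

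The correct tool, which the paper takes from \cite[Section~8]{Lub08}, is the asymmetric trilinear estimate
\begin{equation*}
  \norm{uvw}_0 \leq C\,\norm{u}_1\norm{v}_1\norm{w}_1,
\end{equation*}
which follows from $H^1\hookrightarrow L^6$ in $d\le 3$ and Hölder with three exponents $6$; this handles $\overline u\,\nabla u\cdot\nabla u$ and $u\,\nabla u\cdot\nabla\overline u$ at $m=0$, while $\norm{uvw}_0\le C\norm{u}_0\norm{v}_2\norm{w}_2$ handles $u^2\Delta\overline u$ (as you did). For $m=1$ the paper uses $\norm{uvw}_1\le C\norm{u}_1\norm{v}_2\norm{w}_2$ for all three terms rather than interpolation. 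With these replacements your argument goes through; as written, the $m=0$ and $m=1$ cases — which are exactly where the work lies — are not established.
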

\begin{proof}
  Note that by the Sobolev embedding theorem we have the following bounds
  \begin{subequations} \label{eq:sob-est-product}
      \begin{align}
        \norm{uvw}_0 &\leq C \norm{u}_1 \norm{v}_1 \norm{w}_1,
        \label{eq:sob-est-product-a}\\
        \norm{uvw}_0 &\leq C \norm{u}_0 \norm{v}_2 \norm{w}_2,
        \label{eq:sob-est-product-b}\\
        \norm{uvw}_1 &\leq C \norm{u}_1 \norm{v}_2 \norm{w}_2,
        \label{eq:sob-est-product-c}\\
        \norm{uvw}_m &\leq C \norm{u}_m \norm{v}_m \norm{w}_m,
                       \quad m \geq 2,
        \label{eq:sob-est-product-d}
      \end{align}
  \end{subequations}
  cf.\ \cite[Section~8]{Lub08}.

  For $m=0$, the bound \eqref{eq:commutator-bound} follows from using
  \eqref{eq:sob-est-product-a} for the first two terms
  and \eqref{eq:sob-est-product-b} for the last one in the
  explicit  expression \eqref{eq:comm-FA-g} of $\comm{F_A,g}$. For $m=1$ we apply
  \eqref{eq:sob-est-product-c} to all terms and for $m\geq 2$
  the bound follows from  \eqref{eq:sob-est-product-d}.
\end{proof}

For Lawson methods, a first-order
convergence bound in $H^m$ thus requires $H^{m+2}$ regularity of the
exact solution, which is the same regularity as required for the first-order Lie splitting.

For second-order methods, one has to estimate the double commutator
$\comm{F_A,\comm{F_A,g}}$. A simple calculation shows that a bound in
$H^m$ requires $H^{m+4}$ regularity of the exact solution. This
situation is exactly the same as for second-order Strang splitting
(see \cite{Lub08}). Using \eqref{eq:comm-FA-g} we conclude that the
derivative of the commutator $\comm{F_A,g}$ can be expressed as
\begin{align*}
\comm{F_A,g}'(u)w =
   2 \beta \big( \overline{w} \nabla u \cdot \nabla u
   &+ 2  \overline{u} \nabla u \cdot \nabla w
   + 2  {w} \nabla u \cdot \nabla \overline{u}
   + 2 u \nabla w \cdot \nabla \overline{u}\\
   &+ 2 u \nabla u \cdot \nabla \overline{w}
   +  u^2 \Delta \overline{w}
   +  2 uw \Delta \overline{u}\bigr).
\end{align*}
This commutator can again be bounded in $H^m$ for $u,w\in H^{m+2}$. We thus conclude that Lawson methods require the same regularity for second-order convergence as Strang splitting.

\subsection{Numerical examples}  \label{subsec:numexamples}

Lawson methods exhibit a strong order reduction, in general. For
particular problems, however, they show full order of convergence (see
\cite{KasT05}, \cite{BesDLV17}, \cite{CanG15P}, \cite{BalFMMTP16}, and
\cite{MonB16}). Most of the problems considered in these papers result
from space discretizations of partial differential equations posed
with periodic boundary conditions.

  After space discretization (by finite differences, finite elements,
  or spectral methods) the evolution equation \eqref{eq:semi-Lawson}
  becomes an ordinary differential equation
  \begin{equation}  \label{eq:semi-Lawson-dx}
    \bfu'(t) + A_N \bfu(t) = g_N\bigl(\bfu(t) \bigr), \qquad
    \bfu(0) = \bfu_0.
  \end{equation}
  with a matrix $A_N \in \C^{N \times N}$ and a
  discretization $g_N: \C^N \to\C^N$ of
  $g$, where $N$ denotes the
  employed degrees of freedom.  In order to satisfy
  Assumption~\ref{ass:noframe} the space discretization is required to
  provide matrices $A_N$ such that
  \begin{equation}\label{eq:mat-bound-dx}
    \left\| \ee^{-tA_N}\right\| \le \CE
  \end{equation}
  holds with a constant $\CE$ being uniform in $N$ and $t\in \R$.

In the previous sections we showed
that full order of convergence is only guaranteed if certain
regularity conditions are satisfied. The aim of the following
numerical examples is to show that order reduction can also be
verified numerically, if some of these regularity assumptions are
violated. In fact, such order reductions can even be observed for
linear problems. Hence we resign from presenting numerical examples
for semilinear problems here. Numerous such examples can be found in
the literature mentioned above.  We also restrict ourselves to the
first order schemes covered by our analysis, the exponential Euler and
the Lawson Euler method, since they already show interesting (and different)
convergence behavior.

We consider the linear Schr\"odinger equation
\begin{equation}  \label{eq:linschroedi}
 u_t = i u_{xx} + i f(x) u, \qquad x \in [-\pi,\pi], \quad u(0,\cdot) = u_0,
\end{equation}
with periodic boundary conditions and discretize it using a Fourier
spectral method on an equidistant grid.
Let $N$ be even and denote by $\calF_N$ the discrete Fourier matrix. Then
matrix $A_N$ is given as
\begin{equation*}
 A_N= i \calF_N^{-1} D_N^2 \calF_N, \qquad \text{where} \qquad  D_N = 
 \text{diag}(-\tfrac{N}{2}+1,-\tfrac{N}{2}+2,\ldots,\tfrac{N}{2}),
\end{equation*}
and
\begin{equation*}
  g_N (\bfu) = B_N \bfu, \qquad B_N = i  \,
        \text{diag}\bigl(f(x_{-N/2+1}),\ldots,f(x_{N/2})\bigr),
    \qquad
    x_m = m \tfrac{2\pi}{N}.
\end{equation*}
With this notation, the exact solution of \eqref{eq:semi-Lawson-dx} is
given by
\begin{equation}
  \label{eq:schroedi-N-sol}
  \bfu(t) = \ee^{t(-A_N+B_N)} \bfu_0.
\end{equation}

\begin{example}
  The aim of the first example is to explain that the concept of
  regularity is relevant even in the ODE context.  In order to show what
  regularity means here, for each $N=2^7,\ldots,2^{12}$ we choose a
  a regularity parameter $\alpha \geq 0$ and a
  vector $\bfr = (r_m)_{m=-N/2+1}^{N/2} \in \C^N$ of Fourier coefficients whose
  entries contain random numbers uniformly distributed in the unit
  disc.  Then we define an initial function as the trigonometric
  polynomial
  \begin{equation}
    \label{eq:w-rand-reg}
    \widetilde u_{0;N}(x) =  \sum_{m=-N/2+1}^{N/2}
    \nu_m  e^{imx},
    \qquad
      \nu_m = \frac{r_m}{(1+m^2)^{\tfrac12\bigl(\tfrac12+\alpha+\epsilon\bigr)}},
    \qquad \epsilon = 10^{-6}.
  \end{equation}
  In the limit $N\to\infty$, this sequence of trigonometric
  polynomials converges to a function in the Sobolev space
  $\Hper^\alpha = \Hper^{\alpha}((-\pi,\pi))$ equipped with the norm
  \begin{equation*}
    \norm{u}_\alpha^2 =
    2\pi  \sum_{m \in \Z}
    (1+m^2)^{\alpha} \abs{\nu_m}^2
    \qquad
    \text{for}
    \qquad
    u = \sum_{m \in \Z} \nu_m e^{imx}.
  \end{equation*}
  For $\alpha=0$ we have the standard $L^2$ norm
  \begin{equation*}
    \norm{u}_0^2 =   \int_{-\pi}^\pi \abs{u(x)}^2 \dd x.
  \end{equation*}
  Then we define an initial vector $\bfu_0 \in \C^N$
  for \eqref{eq:semi-Lawson-dx}  corresponding
  to a function $u_0 \in \Hper^\alpha$ by setting the $j$th component
  as
  \begin{equation}
    \label{eq:u0-rand-reg}
     (\bfu_{0})_j = u_{0;N}(x_j),
     \qquad j=- \tfrac{N}{2}+1,\ldots, \tfrac{N}{2},
   \end{equation}
   where $u_{0;N} = {\widetilde u}_{0;N}/\norm{{\widetilde u}_{0;N}}_0$
   has unit $L^2$ norm.
   The discrete Sobolev norms in $\C^N$ corresponding to $\norm{\cdot}_\alpha$
   can be computed via
  \begin{equation*}
    \norm{\bfu}_{\alpha,N}^2 = 2\pi
        \norm{ (I+D_N^2)^{\alpha/2} \calF_N \bfu}_{\C^N}^2,
  \end{equation*}
  where $\norm{\cdot}_{\C^N}$ denotes the Euclidean norm in $\C^N$.
  This yields $\norm{\bfu_0}_{\alpha,N} =  \norm{u_{0;N}}_{\alpha}$.
%

  In Figure~\ref{fig:regularity} we plot $\norm{\bfu_0}_{\mu,N}$ for
  different values of $\mu$ over the number of Fourier modes $N$.
  The three graphs clearly show that $\norm{\bfu_0}_{\mu,N}$ is bounded
  independently of the number $N$ of Fourier modes only for
  $\mu \leq \alpha$.  This corresponds to the continuous case, where
  obviously, the Sobolev norm $\norm{u}_{\mu}$ is bounded for all
  functions $u \in \Hper^{\alpha}$ for $\mu \leq \alpha$.
 \end{example}

The example clearly shows that regularity of the corresponding continuous function is crucial to obtain error bounds which do not deteriorate in the limit $N\to \infty$.
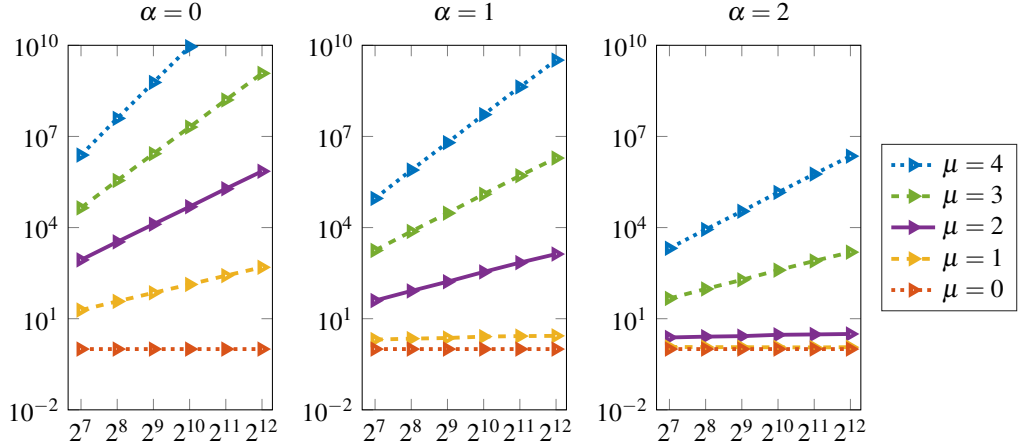
\begin{figure}[tb]
  \centering
\scalebox{0.97}{
%
%
%
\definecolor{mycolor1}{rgb}{0,0.447,0.741}%
\definecolor{mycolor2}{rgb}{0.466,0.674,0.188}%
\definecolor{mycolor3}{rgb}{0.494,0.184,0.556}%
\definecolor{mycolor4}{rgb}{0.929,0.694,0.125}%
\definecolor{mycolor5}{rgb}{0.85,0.325,0.098}%
\begin{tikzpicture}

\begin{axis}[%
width=2.8cm,
height=5cm,
scale only axis,
xmode=log,
xmin=100,
xmax=5000,
xtick = {128,256,512,1024,2048,4096},
xticklabels = {$2^7$,$2^8$,$2^9$,$2^{10}$,$2^{11}$,$2^{12}$},
ymode=log,
ymin=0.01,
ymax=1e10,
yminorticks=true,
title={$\alpha = 0$},
legend style={anchor=west,legend cell align=left,align=left,draw=white!15!black},
]
\addplot [color=mycolor1,dotted,line width=1.5pt,mark=triangle,mark options={solid,rotate=270}]
  table[row sep=crcr]{%
128 2436362.31083268\\
256 39255103.2206164\\
512 597971795.346574\\
1024 8997775657.03496\\
2048 140903254203.141\\
4096 2116176489023.03\\
};
\addplot [color=mycolor2,dashed,line width=1.5pt,mark=triangle,mark options={solid,rotate=270}]
  table[row sep=crcr]{%
128 44438.7915226698\\
256 354009.279374927\\
512 2693811.884962\\
1024 20279862.7645762\\
2048 158818468.702048\\
4096 1193527997.78174\\
};
\addplot [color=mycolor3,solid,line width=1.5pt,mark=triangle,mark options={solid,rotate=270}]
  table[row sep=crcr]{%
128 861.528153630658\\
256 3378.42551675343\\
512 12875.623163231\\
1024 48546.749679145\\
2048 189789.774482608\\
4096 713760.606078139\\
};
\addplot [color=mycolor4,dashed,line width=1.5pt,mark=triangle,mark options={solid,rotate=270}]
  table[row sep=crcr]{%
128 19.2668033643005\\
256 37.1451923071092\\
512 71.1571903756873\\
1024 134.480379421228\\
2048 262.004969888248\\
4096 492.83304008339\\
};
\addplot [color=mycolor5,dotted,line width=1.5pt,mark=triangle,mark options={solid,rotate=270}]
  table[row sep=crcr]{%
128 1\\
256 1\\
512 1\\
1024 1\\
2048 1\\
4096 1\\
};
\end{axis}
\end{tikzpicture}%
%
%
%
\definecolor{mycolor1}{rgb}{0,0.447,0.741}%
\definecolor{mycolor2}{rgb}{0.466,0.674,0.188}%
\definecolor{mycolor3}{rgb}{0.494,0.184,0.556}%
\definecolor{mycolor4}{rgb}{0.929,0.694,0.125}%
\definecolor{mycolor5}{rgb}{0.85,0.325,0.098}%
\begin{tikzpicture}

\begin{axis}[%
width=2.8cm,
height=5cm,
scale only axis,
xmode=log,
xmin=100,
xmax=5000,
xtick = {128,256,512,1024,2048,4096},
xticklabels = {$2^7$,$2^8$,$2^9$,$2^{10}$,$2^{11}$,$2^{12}$},
ymode=log,
ymin=0.01,
ymax=1e10,
yminorticks=true,
title={$\alpha = 1$},
legend style={anchor=west,legend cell align=left,align=left,draw=white!15!black},
]
\addplot [color=mycolor1,dotted,line width=1.5pt,mark=triangle,mark options={solid,rotate=270}]
  table[row sep=crcr]{%
128 91214.588258261\\
256 780872.29248583\\
512 6237599.77399377\\
1024 52221676.4551713\\
2048 424904216.172977\\
4096 3250104098.45532\\
};
\addplot [color=mycolor2,dashed,line width=1.5pt,mark=triangle,mark options={solid,rotate=270}]
  table[row sep=crcr]{%
128 1768.36347510107\\
256 7452.11787362748\\
512 29813.8799451216\\
1024 125010.345687489\\
2048 507765.098248543\\
4096 1943646.29522059\\
};
\addplot [color=mycolor3,solid,line width=1.5pt,mark=triangle,mark options={solid,rotate=270}]
  table[row sep=crcr]{%
128 39.5468345495179\\
256 81.9347208154955\\
512 164.766544049782\\
1024 346.293806088822\\
2048 700.970216333249\\
4096 1342.03695799867\\
};
\addplot [color=mycolor4,dashed,line width=1.5pt,mark=triangle,mark options={solid,rotate=270}]
  table[row sep=crcr]{%
128 2.05258930616349\\
256 2.20579611321097\\
512 2.31552908679878\\
1024 2.57505078123061\\
2048 2.67540809104587\\
4096 2.72310670926527\\
};
\addplot [color=mycolor5,dotted,line width=1.5pt,mark=triangle,mark options={solid,rotate=270}]
  table[row sep=crcr]{%
128 1\\
256 1\\
512 1\\
1024 1\\
2048 1\\
4096 1\\
};
\end{axis}
\end{tikzpicture}%
%
%
%
\definecolor{mycolor1}{rgb}{0,0.447,0.741}%
\definecolor{mycolor2}{rgb}{0.466,0.674,0.188}%
\definecolor{mycolor3}{rgb}{0.494,0.184,0.556}%
\definecolor{mycolor4}{rgb}{0.929,0.694,0.125}%
\definecolor{mycolor5}{rgb}{0.85,0.325,0.098}%
\begin{tikzpicture}

\begin{axis}[%
width=2.8cm,
height=5cm,
scale only axis,
xmode=log,
xmin=100,
xmax=5000,
xtick = {128,256,512,1024,2048,4096},
xticklabels = {$2^7$,$2^8$,$2^9$,$2^{10}$,$2^{11}$,$2^{12}$},
ymode=log,
ymin=0.01,
ymax=1e10,
yminorticks=true,
title={$\alpha = 2$},
legend style={at={(1.1,0.5)},anchor=west,legend cell align=left,align=left,draw=white!15!black},
]
\addplot [color=mycolor1,dotted,line width=1.5pt,mark=triangle,mark options={solid,rotate=270}]
  table[row sep=crcr]{%
128 2077.28255397045\\
256 8666.01894739673\\
512 34364.5044678294\\
1024 143199.629495367\\
2048 575922.662297887\\
4096 2243781.42842645\\
};
\addlegendentry{$\mu=4$};

\addplot [color=mycolor2,dashed,line width=1.5pt,mark=triangle,mark options={solid,rotate=270}]
  table[row sep=crcr]{%
128 46.4553530035876\\
256 95.2813488833193\\
512 189.915591314194\\
1024 396.680326382795\\
2048 795.06180037716\\
4096 1549.27242164906\\
};
\addlegendentry{$\mu=3$};

\addplot [color=mycolor3,solid,line width=1.5pt,mark=triangle,mark options={solid,rotate=270}]
  table[row sep=crcr]{%
128 2.41116038427345\\
256 2.56510581761287\\
512 2.6689585453212\\
1024 2.94972640686121\\
2048 3.03452946223297\\
4096 3.14360502574124\\
};
\addlegendentry{$\mu=2$};

\addplot [color=mycolor4,dashed,line width=1.5pt,mark=triangle,mark options={solid,rotate=270}]
  table[row sep=crcr]{%
128 1.1746920716352\\
256 1.16289343437043\\
512 1.15263442836347\\
1024 1.14550222790247\\
2048 1.13423050202662\\
4096 1.15441859661499\\
};
\addlegendentry{$\mu=1$};

\addplot [color=mycolor5,dotted,line width=1.5pt,mark=triangle,mark options={solid,rotate=270}]
  table[row sep=crcr]{%
128 1\\
256 1\\
512 1\\
1024 1\\
2048 1\\
4096 1\\
};
\addlegendentry{$\mu=0$};

\end{axis}
\end{tikzpicture}
\caption{Illustration of discrete regularity: the discrete
    $\Hper^{\mu}$-Sobolev seminorm $\norm{\bfu_0}_{\mu,N}$ is
    plotted against the number $N$ of Fourier modes, where $\bfu_0$ is
    chosen as in  \eqref{eq:u0-rand-reg} (and thus corresponds to
    a function in $\Hper^{\alpha}$).
  }
  \label{fig:regularity}
\end{figure}

After these introductory explanations, we now
fix the spatial discretization and set $N=2048$. We consider
\eqref{eq:linschroedi} for two different functions $f$:
\begin{subequations}
\begin{align}
  f(x) & = \sin x,    \label{eq:pot1}\\
  f(x) & =  (x/\pi)^2.    \label{eq:pot2}
\end{align}
\end{subequations}

\begin{example}
In Figure~\ref{fig:error-pot1} we show the numerically observed orders of
the exponential Euler and the Lawson Euler method for the smooth, periodic
potential \eqref{eq:pot1} for different
values of $\alpha$ such that the corresponding initial function
is contained in $\Hper^{\alpha}$.
The leading error terms of the new analysis
for the exponential Euler and the Lawson-Euler method are given
in \eqref{eq:euler-cond} and \eqref{eq:lawson-o1-linear},
respectively. For comparison, we also added the leading error term
\eqref{eq:euler-acta} from our previous work.

Since $B:\Hper^\alpha \to \Hper^\alpha$ is a bounded  perturbation of
$A$, the exact solution of the continuous problem is guaranteed to stay in
$\Hper^\alpha$ for initial values in $\Hper^\alpha$ for $\alpha \geq 0$.
For the discrete problem, $\ee^{-\sigma \sw A_N}$
and $\ee^{\sigma \sw (-A_n+B_N)}$ are unitary matrices,
which means that they leave all discrete Sobolev norms $\norm{\cdot}_{\alpha,N}$
invariant. Thus the expression in  \eqref{eq:lawson-o1-linear} can be bounded
by
\begin{align*}
  \norm{ \ee^{-(1-\sigma)\sw A_N } \comm{A_N,B_N} \ee^{-\sigma \sw A_N} \bfu(t)}_{0,N}
  \leq c_1 \norm{\ee^{-\sigma \sw A_N} \bfu(t)}_{1,N}
  = c_1 \norm{\bfu(t)}_{1,N}
  = c_1 \norm{\bfu_0}_{1,N}.
\end{align*}
Here, the first inequality was proved in \cite[Lemma~3.1]{JahL00} with
a constant $c_1$ independent of $N$ and $A_N$.

Hence, the (sufficient but not necessary) order condition
\eqref{eq:lawson-o1-linear} for the Lawson Euler method yields order one convergence for initial values
bounded in $\norm{\cdot}_\alpha$ for $\alpha \geq 1$.
Numerically, we observe an order reduction
for $\alpha=0$ for the Lawson Euler method, while
the exponential Euler method, which requires initial values in
$\Hper^2=D(A)$,  cf.\ \eqref{eq:euler-cond} or \eqref{eq:euler-acta},
shows
order reduction for $\alpha \leq 1$.
For $\alpha=0$ the error of the exponential Euler method has an
irregular behaviour for larger step sizes. To better visualise the
order, we added thin lines (blue in the colored version) to all curves
related to
$\alpha=0$. The slopes $p$
of these lines are also given in the legends (blue in the colored version).
\end{example}

\begin{figure}[tb]
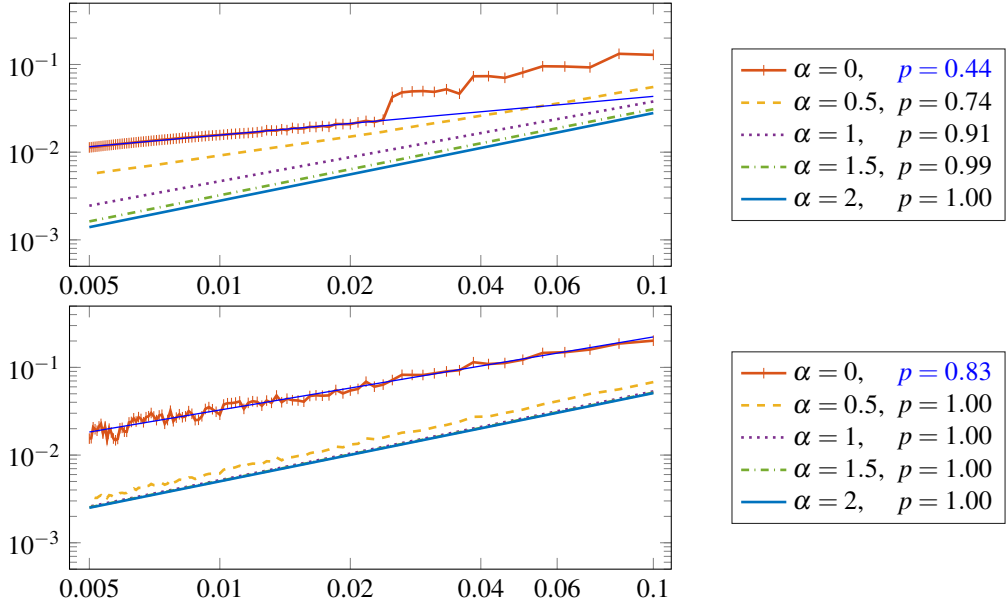

  \centering
  \input{tikz-new/expon_EulerN=2048pot=1.tikz}
  \input{tikz-new/LawsonN=2048pot=1.tikz}
  \caption{Discrete $L^\infty((0,1),L^2(\Omega))$ error of the numerical solution of
    \eqref{eq:linschroedi} with periodic
    potential \eqref{eq:pot1} for the    exponential Euler method (top)
    and the Lawson Euler method
    (bottom) for starting values in $\Hper^\alpha$. The values of $p$ in
    the legend show the numerically observed orders of the schemes.}
  \label{fig:error-pot1}
\end{figure}

\begin{figure}[tb]
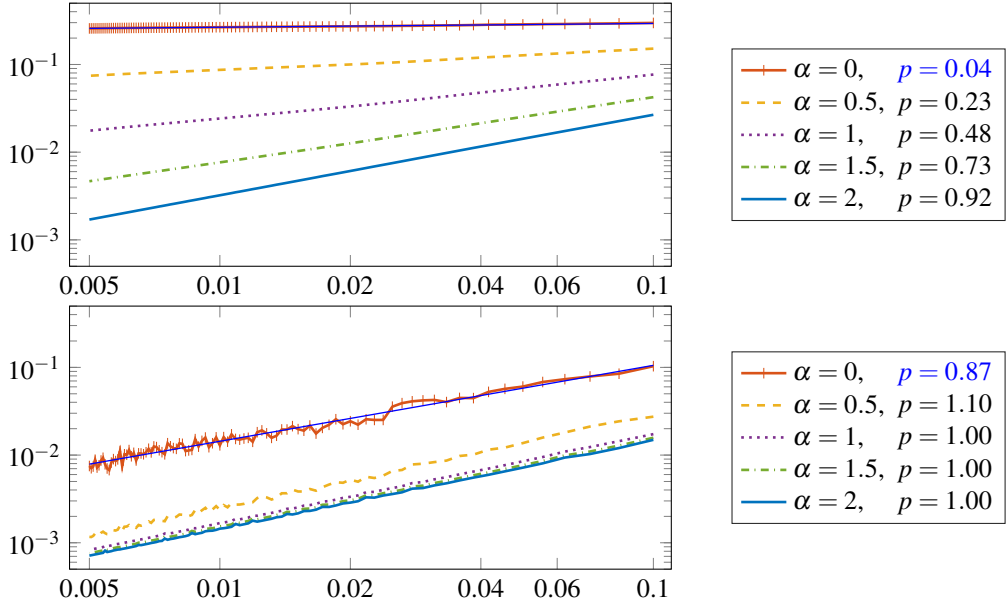

  \centering
  \input{tikz-new/expon_EulerN=2048pot=2.tikz}
  \input{tikz-new/LawsonN=2048pot=2.tikz}
  \caption{Discrete $L^\infty((0,1),L^2(\Omega))$ error of the numerical solution of
    \eqref{eq:linschroedi} with quadratic
    potential \eqref{eq:pot2} for the    exponential Euler method (top)
    and the Lawson Euler method
    (bottom) for starting values in $\Hper^\alpha$. The values of $p$ in
    the legend show the numerically observed orders of the schemes.}
  \label{fig:error-pot2}
\end{figure}

\begin{example}
  In Figure~\ref{fig:error-pot2} we present the same experiment for
  the quadratic potential \eqref{eq:pot2}.  Here, the commutator bound
  of \cite[Lemma~3.1]{JahL00} does not apply, since it requires a
  $C^5$ smooth and periodic potential $f$.  The situations differs
  considerably for the exponential Euler method which suffers
  from order reduction  for all $\alpha\leq 2$ due to the nonsmooth potential $f$.
  In contrast,  the Lawson
  Euler method still converges with order one for $\alpha \geq 0.5$.
\end{example}

Note that for these examples, the convergence behavior is slightly
better than predicted by our theory. This is not a contradiction,
because the order conditions are only sufficient but not necessary. To
be more precise, our analysis contains a worst case estimation of the
error propagation from the local to the global error by using Lady
Windermere's fan in the proof of Theorem~\ref{thm:lawson-error}.
Nevertheless, the examples clearly show the different behavior of the
exponential Euler method and the Lawson-Euler method. Which of the
two methods yields better results depends on the given problem,
as reflected by our error analysis.

\section*{Acknowledgements}

We thank David Hipp and Jan Leibold for helpful discussions and their careful reading of this manuscript and all students of the ``Exponential Integrators'' class at KIT for their inspiration.

We gratefully acknowledge financial support by the Deutsche Forschungsgemeinschaft (DFG) through CRC 1173.


\end{document}